\documentclass{article}

\usepackage{arxiv}

\usepackage[utf8]{inputenc} % allow utf-8 input
\usepackage[T1]{fontenc}    % use 8-bit T1 fonts
\usepackage{hyperref}       % hyperlinks
\usepackage{url}            % simple URL typesetting
\usepackage{booktabs}       % professional-quality tables
\usepackage{amsfonts}       % blackboard math symbols
\usepackage{nicefrac}       % compact symbols for 1/2, etc.
\usepackage{microtype}      % microtypography
\usepackage{lipsum}
\usepackage{tikz}
\usetikzlibrary{arrows,shapes,quotes}
\usetikzlibrary{patterns,decorations.pathreplacing}
\usepackage{amssymb, graphicx}
\usepackage{hyperref,lscape}
\usepackage{blkarray}
\usepackage{color}
\usepackage{amsmath}
\usepackage{mathtools}
\usepackage{mathrsfs}
\usepackage{chngcntr}
\usepackage{apptools}
\usepackage{blkarray}
\usepackage{csquotes}
\usepackage{bm}

\usepackage{parskip}
\usepackage{multicol}
\usepackage{amssymb}
\usepackage{cite}
\usepackage{amsthm}
\usepackage{makeidx}
\usepackage{graphicx}
\usepackage{mathrsfs,xcolor}
\usepackage{enumerate}
\usepackage{blkarray}
\usepackage{bm}
\usepackage{setspace}
\usepackage{algorithm,algorithmic,refcount}
\usepackage[left,pagewise, mathlines]{lineno}
%\linenumbers
\usepackage{tikz-cd} 
\usepackage{lineno}
\usepackage{comment}
\usepackage{longtable}
\usepackage{tabularx}

\numberwithin{table}{subsection}
\numberwithin{equation}{subsection}

\theoremstyle{plain}
\newtheorem{theorem}{Theorem}[subsection]

\newtheorem{proposition}{Proposition}[subsection]
\newtheorem{corollary}{Corollary}[subsection]

\theoremstyle{definition}
\newtheorem{definition}{Definition}[subsection]
\newtheorem{example}{Example}[subsection]

\title{Absolute Concentration Robustness in Rank-One Kinetic Systems}

\author{
Eduardo R.~Mendoza\thanks{Max Planck Institute of Biochemistry, Martinsried near Munich, Germany} \thanks{Faculty of Physics, Ludwig Maximilian University, Munich 80539 Germany} \\
  Mathematics and Statistics Department \\
  Center for Natural Sciences and Environmental Research \\
  De La Salle University \\
  Manila 0922, Philippines \\
  \texttt{eduardo.mendoza@dlsu.edu.ph} \\
  \And
  Dylan Antonio SJ.~Talabis \\
  Institute of Mathematical Sciences and Physics \\
  University of the Philippines \\
  Los Banos, Laguna 4031, Philippines \\
  \texttt{dstalabis1@up.edu.ph}
\And
  Editha C.~Jose \\
  Institute of Mathematical Sciences and Physics \\
  University of the Philippines \\ 
  Los Banos, Laguna 4031, Philippines \\
  \texttt{ecjose1@up.edu.ph} \\
   \And
  Lauro L.~Fontanil\thanks{Corresponding Author} \\
  Institute of Mathematical Sciences and Physics \\
  University of the Philippines \\ 
  Los Banos, Laguna 4031, Philippines \\
  \texttt{llfontanil@up.edu.ph} \\}

\begin{document}
\maketitle

\begin{abstract}
A kinetic system has an absolute concentration robustness (ACR) for a molecular species if its concentration remains the same in every positive steady state of the system. Just recently, a condition that sufficiently guarantees the existence of an ACR in a rank-one mass-action kinetic system was found. In this paper, it will be shown that this ACR criterion does not extend in general to power-law kinetic systems. Moreover, we also discussed in this paper a necessary condition for ACR in multistationary rank-one kinetic system which can be used in ACR analysis. Finally, a concept of equilibria variation for kinetic systems which are based on the number of the system's ACR species will be introduced here.   
\end{abstract}

% keywords can be removed
%\keywords{chemical reaction network theory \and power law kinetics \and complex balancing \and poly-PL kinetics}

\baselineskip=0.30in

\section{Introduction}

Robustness is the capacity of a system to maintain essential functions in the presence of internal or external stresses \cite{kitano}. It is an important characteristic that helps biological systems adapt to environmental changes and thrive. Several types of robustness have already been identified, but this paper focuses on the kind that requires the positive steady states of a system to possess certain qualities. 

As defined in \cite{shifein}, a biological system has an absolute concentration robustness (ACR) for a molecular species if in every positive steady state, the system admits, the concentration of the said species is the same. The identification of the steady states of a system is not an easy task, making this property difficult to determine. Shinar and Feinberg provided a sufficient condition that guarantees a deficiency-one mass-action kinetic (MAK) system exhibits an ACR \cite{shifein}. The said condition requires the corresponding network to have two nonterminal complexes that differ only in a specific species. This structural condition can be easily observed in most small networks and thus offers some advantages.

In 2018, Fortun et al. showed that the result of Shinar and Feinberg can be readily extended to deficiency-one power-law kinetic systems with reactant-determined interactions (PL-RDK) \cite{fort4}. A PL-RDK system is a kinetic system that is more general than the MAK system which requires reactions with the same reactant complex to have identical kinetic order vectors. In a MAK system, the elements of a kinetic order vector are the stoichiometric coefficients in the corresponding reactant complex.

The deficiency-one requirement of the results mentioned above is significantly limiting when it comes to analyzing the capacity of a system to admit an ACR. Fortunately, Fortun and Mendoza \cite{fort3} and Lao et al. \cite{lao} came up with more general results which do not require a network to have a specific deficiency.  Their results utilized the concept of network decomposition which is done by partitioning the reaction set of the network such that each partition generates a network (called subnetwork) smaller than the given network. The analysis then focuses on the low-deficiency subnetworks (with a deficiency of at most one) that contain a Shinar-Feinberg pair (SF-pair). An SF-pair is a pair of reactions with kinetic order vectors that only differ in a particular species. This result allows the ACR determination to be confined to the identified subnetworks that are relatively easier to handle. 

Recently, Meshkat et al. \cite{mesh} provided a necessary and sufficient condition for the existence of stable ACR in a rank-one MAK system. A system has stable ACR if it has an ACR such that each of its positive steady states is stable. However, this ACR criterion does not extend in general to PL-RDK systems, in contrast to the sufficient conditions for low deficiency systems by Horn and Jackson in 1972 \cite{horn} (as generalized by Fortun and Mendoza in 2021 \cite{fort3}) and Shinar and Feinberg in 2010 \cite{shifein}. Counterexamples and examples of PL-RDK systems for the ACR criterion are discussed in this paper. 

This paper also discussed a necessary condition for ACR in multistationary rank-one kinetic systems. The use of this condition in ACR analysis is illustrated by examples from classes of multistationary rank-one mass action systems introduced by Pantea and Voitiuk in 2022 \cite{voitiuk}. In addition, a concept of equilibria variation for kinetic systems based on the number of ACR species is introduced here. Its basic properties are derived and a general low bound is computed for deficiency zero PL-RDK systems. For multistationary rank-one systems, however, the necessary condition leads to a much sharper lower bound. 

%An analogous effort was conducted to see if this result can be readily extended to power-law systems just like the deficiency-dependent ACR extension in \cite{fort4, fort3}. As it turns out, this is not the case here since an example was found showing that the result of Meshkat et al. can not be directly adapted to power-law systems. On the other hand, subsets of PL-RDK systems for which the condition holds were identified. Thus, additional research is needed in order to determine which additional properties are required to establish the condition that works for rank-one power-law condition. 

This is how this paper was organized. Fundamental concepts on chemical reaction networks, kinetic systems, and robustness are provided in Section 2. In Section 3, the problem of extending the result of Meshkat et al. in PLK sytem is presented. A necessary condition for ACR in rank-one multistationary kinetic system is given in Section 4. ACR and equilibria variation are discussed in Section 5. A summary and an outlook are provided in the last section.

\section{Fundamentals of chemical reaction networks and kinetic systems}
\subsection{Structure of chemical reaction networks}

We review in this section some necessary concepts and results on chemical reaction network, the details of which can be found in \cite{fein2, arce, fort3}. 

First, we introduce some notations used in this paper. The sets of real number, nonnegative real numbers, and positive real numbers are denoted, respectively, by $\mathbb R, \mathbb R_{\geq 0}$, and $\mathbb R_{> 0}$. Given that $\mathscr I$ is a finite index set, $\mathbb R^{\mathscr I}$ denotes the usual vector space of real valued functions with domain $\mathscr I$ where addition, subtraction, and scalar multiplication are defined the usual way. For $x\in \mathbb R^{\mathscr I}$ and $i\in \mathscr I$, $x_i$ denotes the $i^{th}$ coordinate of $x$. Lastly, if $x\in \mathbb R^{\mathscr I}_{> 0}$ and $y\in \mathbb R^{\mathscr I}$, then $x^y\in \mathbb R^{\mathscr I}_{> 0}$ is defined to be $x^y = \displaystyle \prod_{i\in \mathscr I}x^{y^i}_i$. 

%Now, we give the definition of a chemical reaction network (CRN).

\begin{definition} A \textbf{chemical reaction network} is a triple $\mathscr N = (\mathscr S, \mathscr C, \mathscr R)$ of three non-empty finite sets:
\begin{enumerate}
    \item a set \textbf{species} $\mathscr S$;
    \item a set $\mathscr C$ of \textbf{complexes}, which are nonnegative integer linear combinations of the species; and
    \item a set $\mathscr R \subseteq \mathscr C \times \mathscr C$ of reactions such that
    \begin{itemize}
        \item $(y,y)\notin \mathscr R$ for all $y\in \mathscr C$, and
        \item for each $y\in \mathscr C$, there exists a $y'\in \mathscr C$ such that $(y,y')\in \mathscr R$ or $(y',y)\in \mathscr R$.
    \end{itemize}
\end{enumerate}
\end{definition}

\noindent The nonnegative coefficients of the species in a complex are referred to as \textbf{stoichiometric coefficients}.  In this paper, a reaction $R_i=(y_i,y'_i)$ is also denoted by $R_i: y_i\rightarrow y'_i$ and $y_i$ and $y'_i$ are called the \textbf{reactant} and \textbf{product complexes} of $R_i$, respectively. Further, we use the symbols $\sigma, \kappa$, and $\rho$ to denote the numbers of species, complexes, and reactions, respectively. The following example shows that a CRN can be represented by a digraph where the complexes and reactions serve as the digraph's vertices and arcs, respectively. %Note that the definition implies that the digraph representation of a CRN cannot have isolated vertices, loops, nor multiple arcs sharing common endpoints.  

\subsection*{Running example 1}

Chemical reaction networks (CRNs) can be represented as a directed graph. The vertices or nodes are the complexes and the reactions are the edges. The CRN is not unique and might not have a physical interpretation. Let us consider the following CRN: \\  

\begin{center}
\begin{tikzpicture}[baseline=(current  bounding  box.center)]
\tikzset{vertex/.style = {draw=none,fill=none}}
\tikzset{edge/.style = {bend left,->,> = latex', line width=0.20mm}} 
% vertices
\node[vertex] (1) at  (-4,1.5) {$X_1 + X_2 + X_3$};
\node[vertex] (2) at  (0,1.5) {$X_3$};
\node[vertex] (3) at  (-4,0) {$2X_1$};
\node[vertex] (4) at  (0,0) {$3X_1 + X_2$};
\node[vertex] (5) at  (4,0) {$4X_1 + 2X_2$};
\node[vertex] (6) at  (0,-1.5) {$4X_1+X_2$};
\node[vertex] (7) at  (4,-1.5) {$3X_1$};
\draw [edge]  (1) to["$k_1$"] (2);
\draw [edge]  (4) to["$k_2$"] (3);
\draw [edge]  (4) to["$k_3$"] (5);
\draw [edge]  (7) to["$k_4$"] (6);
\end{tikzpicture}
\end{center}

The $k_i$'s are called the reaction rate constants. We have $m=3$ (species), $n=7$ (complexes), $n_r=3$ (reactant complexes) and $r=4$ (reactions). Here, we can write
$$\mathscr{S}=\left\{ X_1, X_2, X_3\right\}, \quad \mathscr{C}=\left\{X_1 + X_2 + X_3, X_3, 2X_1, 3X_1 + X_2, 4X_1 + 2X_2, 4X_1+X_2, 3X_1 \right\}.$$

On the other hand, the set of reaction $\mathscr{R}$ consists of the following:
$$\begin{array}{l}
R_{1}: X_1 + X_2 + X_3 \rightarrow X_3 \\
R_{2}: 3X_1+X_2 \rightarrow 2X_1\\
\end{array} \quad \begin{array}{l}
R_{3}: 3X_1+X_2 \rightarrow 4X_1 + 2X_2 \\
R_{4}: 3X_1 \rightarrow 4X_1+X_2\\
\end{array}$$

We denote the CRN $\mathscr{N}$ as $\mathscr{N}= (\mathscr{S}, \mathscr{C}, \mathscr{R})$. The \textbf{linkage classes} of a CRN are the subgraphs of a reaction graph where for any complexes $C_i$, $C_j$ of the subgraph, there is a path between them. Thus, the number of linkage classes, denoted as $l$, of Running Example 1 is three ($l=3$). The linkage classes are:
$$\mathscr{L}_1=\left\{ R_1 \right\}, \quad \mathscr{L}_2=\left\{R_2,R_3 \right\} \quad \mathscr{L}_3=\left\{R_4 \right\}.$$
A subset of a linkage class where any two vertices are connected by a directed path in each direction is said to be a \textbf{strong linkage class}. Considering Running Example 1, there are no strong linkage classes whose number is denoted by $sl$. We also identify the \textbf{terminal strong linkage classes}, the number denoted as $t$, to be the strong linkage classes where there is no reaction from a complex in the strong linkage class to a complex outside the same strong linkage class.  The terminal strong linkage classes can be of two kinds: cycles (not necessarily simple) and singletons (which we call ``terminal points''). 

We now define important CRN classes.  A CRN is \textbf{weakly reversible} if every linkage class is a strong linkage class. A CRN is \textbf{t-minimal} if $t = l$, i.e. each linkage class has only one terminal strong linkage class. Let $n_r$ be the number of reactant complexes of a CRN. Then $n - n_r$ is the number of terminal points. A CRN is called \textbf{cycle-terminal} if and only if $n - n_r = 0$, i.e., each complex is a reactant complex. Clearly, the CRN of the Running Example 1 is both not t-minimal and weakly reversible. 

 With each reaction $y\rightarrow y'$, we associate a \textbf{reaction vector} obtained by subtracting the reactant complex $y$ from the product complex $y'$. The \textbf{stoichiometric subspace} $S$ of a CRN is the linear subspace of $\mathbb{R}^\mathscr{S}$ defined by
$$S := \text{span } \left\lbrace y' - y \in \mathbb{R}^\mathscr{S} \mid y\rightarrow y' \in \mathscr{R}\right\rbrace.$$

The \textbf{map of complexes} $\displaystyle{Y: \mathbb{R}^\mathscr{C} \rightarrow \mathbb{R}^\mathscr{S}_{\geq}}$ maps the basis vector $\omega_y$ to the complex $ y \in \mathscr{C}$. 
The \textbf{incidence map} $\displaystyle{I_a : \mathbb{R}^\mathscr{R} \rightarrow \mathbb{R}^\mathscr{C}}$ is defined by mapping for each reaction $\displaystyle{R_i: y \rightarrow y' \in \mathscr{R}}$, the basis vector $\omega_{R_i}$ (or simply $\omega_i$) to the vector $\omega_{y'}-\omega_{y} \in \mathscr{C}$. 
The \textbf{stoichiometric map} $\displaystyle{N: \mathbb{R}^\mathscr{R} \rightarrow \mathbb{R}^\mathscr{S}}$ is defined as $N = Y \circ  I_a$. 

In Running Example 1, the matrices $Y$ and $I_a$ are

$$Y=\begin{blockarray}{cccccccc}
C_1 & C_2 & C_3 & C_4 & C_5 & C_6 & C_7 \\
\begin{block}{[ccccccc]c}
1 & 0 & 2 & 3 & 4 & 4 & 3 & X_1 \\ 
1 & 0 & 0 & 1 & 2 & 1 & 0 & X_2 \\ 
1 & 1 & 0 & 0 & 0 & 0 & 0 & X_3 \\ 
\end{block}
\end{blockarray}$$

$$I_a=\begin{blockarray}{ccccc}
R_1 & R_2 & R_3 & R_4  \\
\begin{block}{[cccc]c}
-1 & 0 & 0 & 0 & C_1 \\
1 & 0 & 0 & 0 & C_2 \\
0 & 1 & 0 & 0 & C_3 \\
0 & -1 & -1 & 0 & C_4 \\
0 & 0 & 1 & 0 & C_5 \\
0 & 0 & 0 & 1 & C_6 \\
0 & 0 & 0 & -1 & C_7 \\
\end{block}
\end{blockarray}.$$

%We denote the product of $Y$ and $I_a$ as $N$ which is called the stoichiometric matrix.

The \textbf{deficiency} $\delta$ is defined as $\delta = n - l - \dim S$. This non-negative integer is, as Shinar and Feinberg pointed out in \cite{shifein2}, essentially a measure of the linear dependency of the network's reactions. In Running Example 1, the deficiency of the network is 3. 

\subsection{Dynamics of chemical reaction networks}

%The study of a CRN will not be complete without discussing the accompanying dynamics. The dynamics of a CRN requires a certain mapping that associate to each of its reactions some rate function. This mapping is called \textbf{kinetics}. 
A \textbf{kinetics} is an assignment of a rate function to each reaction in a CRN. A network $\mathscr N$ together with a kinetics $K$ is called a \textbf{chemical kinetic system} (CKS) and is denoted here by $(\mathscr N,K)$. %One of the most well-known kinetics is the \textbf{mass action kinetics} (MAK). In this paper, we focus more on kinetics that is more general than MAK, the \textbf{power law kinetics} (PLK). 
\textbf{Power law kinetics} (PLK) is identified by the \textbf{kinetic order matrix} which is an $\rho\times \sigma$ matrix $F=[F_{ij}]$, and vector $k\in \mathbb R^{\mathscr R}_{>0}$, called the \textbf{rate vector}.

\begin{definition} A kinetics $K: \mathbb R^{\mathscr S}_{>0} \rightarrow \mathbb R^{\mathscr R}$ is a \textbf{power law kinetics} if

\begin{center}
    $K_i(x)=k_ix^{F_{i,\cdot}}$ for $i=1,\dots, r$
\end{center}

\noindent where $k_i\in \mathbb R_{>0}$, $F_{i,j} \in \mathbb R$, and $F_{i,\cdot}$ is the row of $F$ associated to reaction $R_i$.
\end{definition}

We can classify a PLK system based on the kinetic orders assigned to its \textbf{branching reactions} (i.e., reactions sharing a common reactant complex). 

\begin{definition}
A PLK system has \textbf{reactant-determined kinetics} (of type PL-RDK) if for any two branching reactions $R_i, R_j\in \mathscr R$, the corresponding rows of kinetic orders in $F$ are identical, i.e., $F_{ih}=F_{jh}$ for $h=1, \dots,m$. Otherwise, a PLK system has \textbf{non-reactant-determined kinetics} (of type PL-NDK).
\end{definition}

Consider the CRN in Running example 1 with the following kinetic order matrix.

\begin{equation}
\nonumber
F=\begin{blockarray}{cccc}
X_1 & X_2 & X_3 \\
\begin{block}{[ccc]c}
0 & 0 & 2 & R_1 \\  
1 & 1 & 0 & R_2 \\  
1 & 1 & 0 & R_3 \\  
1 & 0 & 0 & R_4 \\ 
\end{block}
\end{blockarray}.
\end{equation}

\noindent Observe that $R_2$ and $R_3$ are two branching reactions whose corresponding rows in $F$ (or \textbf{kinetic order vectors}) are the same. Hence, the CRN is a PL-RDK system. 

The well-known \textbf{mass action kinetic system} (MAK) forms a subset of PL-RDK systems. In particular, MAK is given by $K_i(x)=k_ix^{Y_{.,j}}$ for all reactions $R_i: y_i \rightarrow y'_i \in \mathscr R$ with $k_i\in \mathbb R_{>0}$ (called \textbf{rate constant}). The vector $Y_{.,j}$ contains the stoichiometric coefficients of a reactant complex $y_i\in \mathscr C$.

%We now describe dynamics of a CKS through the following definition.

\begin{definition} The \textbf{species formation rate function} of a chemical kinetic system is the vector field

\begin{center}
    $f(c)=NK(c)=\displaystyle \sum_{y_i\rightarrow y'_i\in \mathscr R}K_i(c)(y'_i-y_i)$, where $c\in \mathbb R^{\mathscr S}_{\geq 0}$,
\end{center}
where $N$ is the $m \times r$ matrix, called \textbf{stoichiometric matrix},  whose columns are the reaction vectors of the system. \noindent The equation $dc/dt=f(c(t))$ is the \textbf{ODE or dynamical system} of the chemical kinetic system. An element $c^*$ of $\mathbb R^{\mathscr S}_{>0}$ such that $f(c^*)=0$ is called a \textbf{positive equilibrium} or \textbf{steady state} of the system. We use $E_+(\mathscr N,K)$ to denote the set of all positive equilibria of a CKS.
\end{definition}

Deficiency is one of the important parameters in CRNT to establish claims regarding the existence, multiplicity, finiteness and parametrization of the set of positive steady states. 

The dynamical system $f(x)$ (or species formation rate function (SFRF)) of the Running Example 1 can be written as
$$\left[ 
\begin{array}{c}
    \dot{X_1} \\
    \dot{X_2} \\
    \dot{X_3} \\
\end{array}
\right]=
\begin{blockarray}{cccc}
R_1 & R_2 & R_3 & R_4  \\
\begin{block}{[cccc]}
-1 & -1 & 1 & 1 \\
-1 & -1 & 1 & 1 \\
0 & 0 & 0 & 0 \\
\end{block}
\end{blockarray}
\left[ 
\begin{array}{c}
	k_1 X_3^2 \\
	k_2 X_1 X_2 \\
    k_3 X_1 X_2 \\
	k_4 X_1 \\
\end{array}
 \right]
=NK(x).$$

Analogous to the species formation rate function, we also have the complex formation rate function.

\begin{definition} The \textbf{complex formation rate function} $g: \mathbb R^{\mathscr S}_{>0}\rightarrow \mathbb R^{\mathscr C}$ of a chemical kinetic system is the vector field

\begin{center}
    $g(c)=I_aK(c)=\displaystyle \sum_{y_i\rightarrow y'_i\in \mathscr R}K_i(c)(\omega_{y'_i}-\omega_{y_i})$, where $c\in \mathbb R^{\mathscr S}_{\geq 0}$.
\end{center}

\noindent where $I_a$ is the incidence map. A CKS is \textbf{complex balanced} if it has complex balanced steady state, i.e., there is a composition $c^{**}\in \mathbb R_{>0}^{\mathscr S}$ such that $g(c^{**} )=0$. We denote by $Z_+(\mathscr N,K)$ the set of all complex balanced steady states of the system.
\end{definition}

%The following result on complex-balanced equilibria is vital in the discussion below. 

%\newtheorem{theorem}
%\begin{theorem}[Theorem 2B, Horn \cite{horn}] \label{1}  If a CKS has a complex balanced equilibrium then the underlying CRN is weakly reversible.
%\end{theorem}

%\newtheorem{theorem}
\begin{theorem}[Corollary 4.8, \cite{fein3}] \label{1.5}  If a CKS has deficiency 0, then its steady states are all complex balanced.
\end{theorem}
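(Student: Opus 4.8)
The plan is to reduce the statement to a purely linear-algebraic fact about the factorization $N = Y \circ I_a$ of the stoichiometric map. First I would observe that the species formation rate function factors through the complex formation rate function as $f = Y \circ g$, since $f(c) = NK(c) = Y(I_a K(c)) = Y(g(c))$. Consequently, a composition $c^*$ is a steady state precisely when $g(c^*) \in \ker Y$; on the other hand, $g(c^*) = I_a K(c^*)$ always lies in $\Ima I_a$ by construction. Thus every steady state satisfies $g(c^*) \in \ker Y \cap \Ima I_a$, and the theorem will follow once I show that the deficiency-zero hypothesis forces this intersection to be trivial.

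The key identity I would establish is $\delta = \dim(\ker Y \cap \Ima I_a)$. The main ingredient for this is that the rank of the incidence map equals $n - l$, i.e. $\dim \Ima I_a = n - l$. This is the standard fact that the incidence matrix of a directed graph on $n$ vertices with $l$ connected components has rank $n - l$; here the vertices are the complexes and the connected components are exactly the linkage classes. With this in hand, I would apply the rank-nullity theorem to the restriction of $Y$ to the subspace $\Ima I_a$: since $S = \Ima N = Y(\Ima I_a)$, its image is $S$ and its kernel is $\ker Y \cap \Ima I_a$, so $\dim \Ima I_a = \dim S + \dim(\ker Y \cap \Ima I_a)$, that is, $(n - l) = \dim S + \dim(\ker Y \cap \Ima I_a)$. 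Rearranging and invoking the definition $\delta = n - l - \dim S$ yields $\delta = \dim(\ker Y \cap \Ima I_a)$.

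Finally I would conclude: if $\delta = 0$ then $\ker Y \cap \Ima I_a = \{0\}$, so for any steady state $c^*$ the vector $g(c^*)$ identified above must vanish. Hence $g(c^*) = I_a K(c^*) = 0$, which is exactly the statement that $c^*$ is a complex balanced steady state; as this holds for every $c^* \in E_+(\mathscr N, K)$, all steady states are complex balanced.

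I expect the main obstacle to be the graph-theoretic rank computation $\dim \Ima I_a = n - l$, which is where the combinatorial structure of the network enters and must be connected carefully to the linear algebra (identifying connected components of the reaction graph with linkage classes); everything after that is a short rank-nullity argument. The factorization and intersection bookkeeping are routine, but some care is needed to keep straight that $\ker Y$ is the only potential source of equilibria that fail to be complex balanced, and that deficiency zero is precisely the condition ruling these out.
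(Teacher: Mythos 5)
Your argument is correct and is precisely the classical proof of this result: the paper itself states Theorem \ref{1.5} without proof as a citation of Feinberg's lecture notes, and the factorization $f = Y \circ g$ together with the identity $\delta = \dim(\ker Y \cap \Ima I_a)$ (obtained from $\dim \Ima I_a = n - l$ and rank--nullity applied to $Y$ restricted to $\Ima I_a$) is exactly how the cited source establishes it. No gaps; the graph-theoretic rank fact you flag as the main obstacle is standard and all the surrounding bookkeeping is handled correctly.
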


\subsection{A brief review of concentration robustness}
The concept of \textbf{absolute concentration robustness (ACR)} was first introduced by Shinar and Feinberg in their well-cited paper published in \textit{Science} \cite{shifein}. ACR pertains to a phenomenon in which a species in a kinetic system carries the same value for any positive steady state the network may admit regardless of initial conditions. In particular, a PL-RDK system $(\mathscr{N},K)$ has ACR in a species $X \in \mathscr{S}$ if there exists $c^*\in E_+(\mathscr{N},K)$ and for every other $c^{**} \in E_+(\mathscr{N},K)$,  we have $c^{**}_X =c^*_{X}$ where  $c^{**}_X$ and $c^*_{X}$ denote the concentrations of $X$ in $c^*$ and $c^{**}$, respectively.

Fortun and Mendoza \cite{fort3} emphasized that ACR as a dynamical property is conserved under dynamic equivalence, ie., they generate the same set of ordinary differential equations. They further investigated ACR in power law kinetic systems and derived novel results that guarantee ACR for some classes of PLK systems. For these PLK systems, the key property for ACR in a species $X$ is the presence of an SF-reaction pair. A pair of reactions in a PLK system is called a \textbf{Shinar-Feinberg pair} (or \textbf{SF-pair)} in a species $X$ if their kinetic order vectors differ only in $X$. A subnetwork of the PLK system is of \textbf{SF-type} if  it contains an SF-pair in $X$.

\section{The extension problem for the ACR criterion for rank-one systems}

Just recently, Meshkat et al. gave a necessary and sufficient condition that guarantees the existence of stable ACR in rank-one MAK systems \cite{mesh}. A system is said to have a stable ACR if it has an ACR where each of its positive steady states is stable. The condition requires the existence of an embedded one-species network that follows some structures described using arrow diagrams. 

\subsection{A review of the key results of Meshkat et al.}

A network $\mathscr N=\left(\mathscr S, \mathscr C, \mathscr R \right)$ is called one-species network if there is a species $A_i$ such that $(y,y')\in \mathscr R$ implies that $y_j= y_j'=0$ for all species $A_j\in S-\{A_i\}$. In other words, every complex in the network takes the form $kA_i$, where $k$ is a nonnegative integer. The following definition and example were taken from \cite{mesh}.

\begin{definition}
    Let $\mathscr N=\left(\{A\}, \mathscr C, \mathscr R \right)$  be a one-species network with $|\mathscr R|\geq 1$. Let every reaction of $\mathscr N$ be of the form $aA\rightarrow bA$, where $a,b\geq 0$ and $a\neq b$. Suppose $\mathscr N$ has $m$ distinct reactant complexes with $a_1<a_2<\dots<a_m$ as their stoichiometric species. The \textbf{arrow diagram} of $\mathscr N$, denoted by $\rho=(\rho_1,\rho_2,\dots,\rho_m)$, is the element of $\{\rightarrow,\leftarrow, \longleftrightarrow\}^m$ given by:
\begin{equation}\nonumber
   \rho_i = \left\{
\begin{array}{clrc@{\qquad}l}
\rightarrow & \textnormal{if for all reactions\;} a_iA \rightarrow bA \textnormal{\;in\;} G, \textnormal{\;it is the case that\;} b > a_i  \\
\leftarrow & \textnormal{if for all reactions\;} a_iA \rightarrow bA \textnormal{\;in\;} G, \textnormal{\;it is the case that\;} b < a_i\\
\longleftrightarrow & \textnormal{otherwise}
\end{array}
\right.
\end{equation}
\end{definition}

\begin{example}
Consider the network determined by $\{B\rightarrow A, 2A+B \rightarrow A+2B\}$. After removing species $A$, the embedded network has the following reaction set $\{0\leftarrow B \rightarrow 2B\}$ which has $(\longleftrightarrow)$ as arrow diagram. On the other hand, $\{0\rightarrow A, A\rightarrow 2A\}$ is the embedded network obtained after removing $B$ with arrow diagram $(\rightarrow,\leftarrow)$.
\end{example}

Here is the main result in \cite{mesh} that provides a necessary and sufficient condition that ensure the existence of ACR in some rank-one MAK systems. Their results are based on the idea of arrow diagrams.

\begin{theorem}\label{meshkatthm}
    Let $\mathscr N$ be a rank-one network with species $A_1, A_2, \dots, A_m$. Then, the following are equivalent:
    \begin{enumerate}
        \item $\mathscr N$ has stable ACR and admits a positive state.
        \item There is a species $A_{i^*}$ such that the following holds:
        \begin{enumerate}
            \item for the embedded network of $\mathscr N$ obtained by removing all species except $A_{i^*}$, the arrow diagram has one of the following forms:
           \begin{equation}
                \begin{array}{rl}               (\longleftrightarrow,\leftarrow,\leftarrow,\dots,\leftarrow), & (\rightarrow, \rightarrow, \dots, \rightarrow,\longleftrightarrow)  \\
                (\rightarrow, \rightarrow, \dots, \rightarrow,\longleftrightarrow, \leftarrow,\leftarrow,\dots,\leftarrow), &  (\rightarrow, \rightarrow, \dots, \rightarrow, \leftarrow,\leftarrow,\dots,\leftarrow)
              \end{array}
             \end{equation}
         \item the reactant complexes of $\mathscr N$ differ only in species $A_{i^*}$ (i.e., if $y$ and $\hat{y}$ are both reactant complexes, then $y_i=\hat{y_i}$ for all $i\neq i^*$).
      \end{enumerate}
    \end{enumerate}
\end{theorem}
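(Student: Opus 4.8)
The plan is to exploit the rank-one hypothesis to collapse the dynamics to a single scalar equation, and then to read off ACR and its stability from a one-variable polynomial. Since $N$ has rank one, every reaction vector is a common scalar multiple of a single vector $v\in\mathbb R^{\mathscr S}$; writing $y_i'-y_i=\lambda_i v$, the species formation rate function becomes $f(c)=\big(\sum_i \lambda_i k_i c^{y_i}\big)v=\phi(c)\,v$, where $\phi(c):=\sum_i \lambda_i k_i c^{y_i}$ is scalar-valued. Hence $E_+(\mathscr N,K)=\{c\in\mathbb R^{\mathscr S}_{>0}:\phi(c)=0\}$, and the stoichiometric compatibility classes are the line segments in direction $v$. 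A first observation is that ACR in $A_{i^*}$ forces $v_{i^*}\neq 0$: if $v_{i^*}=0$ then $c_{i^*}$ is conserved and takes different values in different classes, so it cannot be constant across all positive steady states. I would record $v_{i^*}\neq 0$ as a standing reduction for both directions.

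For the sufficiency direction (2)$\Rightarrow$(1), I would first use condition (b). Writing each reactant complex as $y_i=w+a_i e_{i^*}$, with $w$ the common part, $e_{i^*}$ the $i^*$-th unit vector, and $a_i=(y_i)_{i^*}$, the scalar function factors as $\phi(c)=c^{w}\,\tilde q(c_{i^*})$ with $\tilde q(x)=\sum_i \lambda_i k_i x^{a_i}$. Because $c^{w}>0$ on the positive orthant, the positive steady states are exactly the points whose $A_{i^*}$-coordinate is a positive root of $\tilde q$; and $\tilde q$ is, up to the nonzero factor $1/v_{i^*}$, precisely the scalar rate function of the embedded one-species network obtained by deleting every species except $A_{i^*}$. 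Collecting terms by the distinct reactant stoichiometries $a_1<\dots<a_m$ gives $\tilde q(x)=\sum_j C_j x^{a_j}$, where the sign of $C_j$ is $+$ for an entry $\rightarrow$, $-$ for $\leftarrow$, and unconstrained for $\longleftrightarrow$. A short check shows that each of the four listed arrow diagrams produces a coefficient sign sequence with exactly one sign change for every realization of the $\longleftrightarrow$ coefficient (or none, in which case there is no positive root): by Descartes' rule of signs, $\tilde q$ then has at most one positive root, and where the root exists $\tilde q$ passes from positive to negative, so the induced scalar dynamics $\dot x=\tilde q(x)$ is decreasing through it. This yields a unique, asymptotically stable positive steady state in the $A_{i^*}$-coordinate, i.e. stable ACR, and the $\longleftrightarrow$ or the trailing $\leftarrow$ guarantees that a positive state is admitted for suitable rates.

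For the necessity direction (1)$\Rightarrow$(2), I would take $A_{i^*}$ to be an ACR species and establish (b) and (a) in turn. For (b), suppose two reactant complexes disagreed in some coordinate $j\neq i^*$. In logarithmic coordinates $u=\log c$, the steady-state locus is the zero set of the exponential sum $\sum_i \lambda_i k_i e^{\langle y_i,u\rangle}$, and ACR asserts that this locus lies in the hyperplane $\{u_{i^*}=\log x^*\}$. I would argue, using the freedom to choose rate constants, that such a coordinate disagreement permits tuning the coefficients so that the restriction of $\phi$ to a line transverse to $\{c_{i^*}=x^*\}$ acquires a second positive root with $c_{i^*}\neq x^*$, contradicting ACR. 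Granting (b), the factorization of the previous paragraph again identifies the positive steady states with the positive roots of the embedded one-species polynomial, so stable ACR for the network is equivalent to the embedded one-species network having, for all rate constants, at most one positive root which is moreover stable. It then remains to prove the converse of the classification: any arrow diagram outside the four listed shapes fails this. A diagram containing two $\longleftrightarrow$ entries, or a $\longleftrightarrow$ or $\leftarrow$ occurring before some $\rightarrow$, admits a rate choice whose coefficient sequence has at least two sign changes, hence two positive roots, violating ACR; the all-$\rightarrow$ and all-$\leftarrow$ diagrams admit no positive state; and a single root through which $\tilde q$ increases is unstable, ruling out patterns such as $(\leftarrow,\rightarrow)$. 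This forces the diagram into exactly one of the four forms.

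I expect the main obstacle to be the necessity of (b). Once all reactant complexes agree outside $A_{i^*}$, the factorization $\phi=c^{w}\tilde q(c_{i^*})$ is immediate and everything reduces to the one-variable Descartes'-rule bookkeeping. Proving that ACR cannot occur with a genuinely multivariate reactant structure is the delicate point: it amounts to showing that the zero set of a sign-varying exponential sum is contained in a single coordinate hyperplane only when the exponents effectively depend on that one coordinate, which I would establish by a careful rate-perturbation argument exhibiting an explicit second steady state off the hyperplane whenever the reactant complexes disagree elsewhere. The arrow-diagram classification itself I regard as routine sign-counting by comparison.
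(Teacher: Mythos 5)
First, a point of orientation: the paper does not prove this statement at all. Theorem \ref{meshkatthm} is quoted from Meshkat, Shiu and Torres \cite{mesh} as background, so there is no in-paper proof to compare against; your attempt has to be judged on its own. Your reduction of the dynamics to $f(c)=\phi(c)\,v$ and the sufficiency direction via the factorization $\phi(c)=c^{w}\tilde q(c_{i^*})$ and Descartes' rule is the right idea and is essentially the mechanism the present paper itself exploits in its counterexample (Example 3.2.1). That half of your argument is sound, modulo bookkeeping of the sign of $v_{i^*}$ and of what ``admits a positive state'' quantifies over.

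The necessity direction, however, contains genuine gaps. (i) Your standing reduction ``ACR in $A_{i^*}$ forces $v_{i^*}\neq 0$'' is not established by the conservation argument you give: if $v_{i^*}=0$, positive steady states may all lie in a \emph{single} stoichiometric class, so nothing forces distinct $c_{i^*}$-values. For instance, $A_1+A_2\rightarrow A_1+2A_2$, $A_2\rightarrow 0$ gives $\phi(c)=c_2(k_1c_1-k_2)$, hence ACR in $A_1$ even though $v=e_2$; what rules this out is the \emph{stability} requirement (the equilibria form a continuum in one class), not conservation. (ii) The key step --- that ACR forces condition 2(b), i.e.\ that the reactant complexes agree outside $A_{i^*}$ --- is only announced as ``a careful rate-perturbation argument,'' and you yourself flag it as the delicate point; as written this is a plan, not a proof, and it is precisely the part where the quantification over rate constants (the ``unconditional'' version of the statement in \cite{mesh}) must enter. (iii) In ruling out the other arrow diagrams you pass from ``the coefficient sequence has at least two sign changes'' to ``hence two positive roots''; Descartes' rule only gives an upper bound, so you still owe an explicit choice of coefficient magnitudes realizing two positive roots (easy, but it must be done, since two sign changes is compatible with zero positive roots). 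Until (i)--(iii) are filled in, the equivalence is only proved in the direction $(2)\Rightarrow(1)$.
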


Notice that in the above example, the embedded network obtained by removing species $A$ from $\mathscr N$ has $(\longleftrightarrow)$ as arrow diagram that falls under the enumerated arrow diagrams in the above theorem. Moreover, the reactant complexes in $\mathscr N$ differ only in $A$. It follows that $\mathscr N$ has a stable ACR.

\subsection{Examples of PL-RDK systems for which the ACR critera do not hold}

Here is the direct adaptation of the sufficient condition of Theorem \ref{meshkatthm} in PLK-systems and formalism. Let $(\mathscr N, K)$ be a rank-one PLK-system having species $A_1, A_2, \dots, A_m$. Also, let $F$ be the kinetic order matrix of the system. Then, $\mathscr N$ has an ACR if there is species $A_{j^*}$ such that the following holds

        \begin{enumerate}
            \item for the embedded network of $\mathscr N$ obtained by removing all species except $A_{j^*}$, the arrow diagram has one of the following forms:
           \begin{equation}
                \begin{array}{rl}               (\longleftrightarrow,\leftarrow,\leftarrow,\dots,\leftarrow), & (\rightarrow, \rightarrow, \dots, \rightarrow,\longleftrightarrow)  \\
                (\rightarrow, \rightarrow, \dots, \rightarrow,\longleftrightarrow, \leftarrow,\leftarrow,\dots,\leftarrow), &  (\rightarrow, \rightarrow, \dots, \rightarrow, \leftarrow,\leftarrow,\dots,\leftarrow)
              \end{array}
             \end{equation}
         \item the kinetic order vectors are pairwise SF-pairs in $A_{j^*}$ (i.e., $F_{ij}=F_{lj}$ for all $j\neq j^*$).
      \end{enumerate}

The following counterexample shows that the statement above is not necessarily true.

\begin{example}
Consider $\mathscr N=(\{A,B\},\mathscr C, \mathscr R)$, where $\mathscr R$ consists of the following reactions and rate constants:
\begin{equation}\nonumber
    \begin{array}{cl}
         R_1:& B {\xrightarrow[r_1]{}} A \\
         R_2:& 2A+B {\xrightarrow[r_2]{}} 3A\\
         R_3:& 3A+B {\xrightarrow[r_3]{}} 2A+2B\\
         R_4:& 4A+B {\xrightarrow[r_4]{}} 3A+2B\\
    \end{array}
\end{equation}
This is a rank-one network with $S=\textnormal{span\;}\{A-B\}$ as its stoichiometric subspace. Suppose that it is endowed with power law kinetics and has the following kinetic order matrix:

\begin{equation}\nonumber
F=\begin{blockarray}{ccc}
A & B \\
\begin{block}{[cc]c}
p_1 & q & R_1 \\  
p_2 & q & R_2 \\  
p_3 & q & R_3 \\  
p_4 & q & R_4\\ 
\end{block}
\end{blockarray}
\end{equation}
where $p_i$'s are integers. We obtained the following embedded network after removing $B$.

\begin{equation}\nonumber
\begin{array}{c}
    0\rightarrow A\\
    2A\leftrightarrow 3A\leftarrow 4A
\end{array}
\end{equation}
With the four distinct reactant complexes in this network, here is the corresponding arrow diagram $(\rightarrow, \rightarrow, \leftarrow, \leftarrow)$. Now, observe that $(\mathscr N, K)$ has the following ODEs:
\begin{equation}\nonumber
\begin{array}{cc}
    \dot{A}= & {r_1}A^{p_1}B^q + {r_2}A^{p_2}B^q - {r_3}A^{p_3}B^q - {r_4}A^{p_4}B^q \\
    \dot{B}= & -{r_1}A^{p_1}B^q - {r_2}A^{p_2}B^q + {r_3}A^{p_3}B^q + {r_4}A^{p_4}B^q 
\end{array}
\end{equation}
Solving for the positive equilibria,  one of these equations is equated to zero and get
\begin{equation}\nonumber
\begin{array}{cc}
    B^q({r_1}A^{p_1} + {r_2}A^{p_2} - {r_3}A^{p_3} - {r_4}A^{p_4} & = 0\\
    \Leftrightarrow {r_1}A^{p_1} + {r_2}A^{p_2} - {r_3}A^{p_3} - {r_4}A^{p_4} & = 0
\end{array}
\end{equation}
Observe that when $p_1<p_2<p_3<p_4$, the polynomial
\begin{equation}\label{poly}
    {r_1}A^{p_1} + {r_2}A^{p_2} - {r_3}A^{p_3} - {r_4}A^{p_4}
\end{equation}
has exactly one positive root by the Descarte's Rule of Signs. This means that the system has ACR in $A$. On the other hand, suppose that 
\begin{center}
    $     \begin{bmatrix} p_1 \\ p_2 \\ p_3 \\ p_4 \end{bmatrix} =
       \begin{bmatrix} 0 \\ 3 \\ 1 \\ 2 \end{bmatrix} \;$ and 
    $     \begin{bmatrix} r_1 \\ r_2 \\ r_3 \\ r_4 \end{bmatrix} =
       \begin{bmatrix} 4 \\ 1 \\ 2 \\ 3 \end{bmatrix} \;$
\end{center}
Then, the polynomial in (\ref{poly}) becomes 
\begin{equation}
    4+A^3-2A-3A^2=4-2A-3A^2+A^3
\end{equation}
This polynomial has two positive roots namely, $1$ and $1 + \sqrt{5}$. This means that the system does not
have an ACR in $A$ in this case. 
\end{example}

It can be observed in the above counterexample the crucial role played by $p_i$'s. If these parameters are not carefully chosen, ACR
may not be observed in the system. The problem becomes more complicated when $p_i$'s are non
integers since the solvability of the polynomial in (\ref{poly}) may not be determined immediately. This negatively affects the possibility of extending the result of Meshkat et al. to PLK-systems where $p_i$'s are allowed to be real numbers.

\subsection{The stable ACR criterion for homogenous PL-quotients of mass action systems}

In this section, we introduce the set of homogeneous monomial quotients of mass action systems (PL-MMK) where the stable ACR criterion holds. To define PL-MMK, we recall some definition and result from \cite{ACB2022}.

\begin{definition}
\label{PFF}
Two kinetics $K, K'$ in $\mathscr{K}_\Omega (\mathscr{N})$ are \textbf{positive function factor equivalent} (PPF-equivalent) if for all $x\in \mathbb{R}^{\mathscr{S}}_{>}$ and every reaction $q$, $\dfrac{K_q(x)}{K'_q(x)}$ is a positive function of $x$ only, i.e. independent of $q.$
\end{definition}

A key property of PFF-equivalence is expressed in the following proposition:

\begin{proposition}[\cite{ACB2022}]
If $K$, $K'$ are PPF, then $E_+(\mathscr{N}, K) =  E_+(\mathscr{N}, K')$.
\label{PFFresult}
\end{proposition}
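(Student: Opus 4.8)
The plan is to use the defining property of PFF-equivalence to write the two species formation rate functions as scalar multiples of one another by a single nowhere-vanishing positive function, after which the equality of the equilibrium sets follows immediately from positivity.

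First I would unpack Definition \ref{PFF}. Since the ratio $K_q(x)/K'_q(x)$ is, for each fixed $x \in \mathbb{R}^{\mathscr S}_{>0}$, the same positive number for every reaction $q$, there is a single positive function $\lambda : \mathbb{R}^{\mathscr S}_{>0} \to \mathbb{R}_{>0}$ with
\begin{equation}\nonumber
K_q(x) = \lambda(x)\,K'_q(x) \qquad \text{for every reaction } q \text{ and every } x \in \mathbb{R}^{\mathscr S}_{>0}.
\end{equation}
Next I would substitute this into the species formation rate function. Writing $f_K$ and $f_{K'}$ for the SFRFs associated to $K$ and $K'$, respectively, and using $f(c) = \sum_{y_i \to y'_i} K_i(c)(y'_i - y_i)$, the common factor $\lambda(c)$ can be pulled out of the sum:
\begin{equation}\nonumber
f_K(c) = \sum_{y_i \to y'_i} K_i(c)(y'_i - y_i) = \lambda(c) \sum_{y_i \to y'_i} K'_i(c)(y'_i - y_i) = \lambda(c)\,f_{K'}(c).
\end{equation}
Thus $f_K = \lambda \cdot f_{K'}$ pointwise on $\mathbb{R}^{\mathscr S}_{>0}$.

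Finally I would conclude by exploiting the positivity of $\lambda$. A point $c^* \in \mathbb{R}^{\mathscr S}_{>0}$ belongs to $E_+(\mathscr N, K)$ exactly when $f_K(c^*) = 0$, that is, when $\lambda(c^*)\,f_{K'}(c^*) = 0$; since $\lambda(c^*) > 0$, this is equivalent to $f_{K'}(c^*) = 0$, i.e. to $c^* \in E_+(\mathscr N, K')$. Hence $E_+(\mathscr N, K) = E_+(\mathscr N, K')$. There is no serious obstacle in this argument; the only point requiring care is that the reaction-independence of the ratio genuinely produces a well-defined scalar function $\lambda$, and that the positivity of $\lambda$ is indispensable — were $\lambda$ permitted to vanish at some composition, multiplying through could create or destroy zeros, and the two equilibrium sets would no longer need to coincide.
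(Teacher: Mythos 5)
Your argument is correct and is exactly the standard one: the reaction-independent positive ratio yields a scalar function $\lambda$ with $f_K = \lambda\, f_{K'}$, and positivity of $\lambda$ forces the zero sets to coincide. The paper itself does not reprove this proposition (it is quoted from \cite{ACB2022}), but your proof is the intended argument and has no gaps.
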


Let $(\mathscr{N}, K)$ be a MAK system with $m$ species and $r$ reactions and $(\beta_1,\cdots,\beta_m)$ any real vector. Let $F_{MAK}$ and $F_\beta$ be the $r \times m$ matrices defined as follows:
\begin{itemize}
    \item $F_{MAK,q} = Y^\top_{\rho(q)}$, where $q$ is a reaction and $Y$ is the matrix of complexes of $(\mathscr{N}, K)$.
    \item $F_\beta$ with identical rows $\beta = (\beta_1,\cdots,\beta_m)$.
\end{itemize}

\begin{definition}
A \textbf{homogeneous PL quotient} of a MAK system $(\mathscr{N}, K)$ is a PLK system $(\mathscr{N}, K_{PLK})$ with the same rate constants and a kinetic order matrix $F_{PLK} = F_{MAK} - F_\beta$.
\end{definition}

We have the following result:

\begin{proposition}
The stable ACR criterion holds for homogeneous PL-quotients  of rank-one mass action systems.
\end{proposition}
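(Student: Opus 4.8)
The plan is to reduce the claim entirely to the mass action case, where Theorem \ref{meshkatthm} already applies, by exploiting the fact that a homogeneous PL-quotient and its parent mass action system are PFF-equivalent and, more strongly, have proportional vector fields. Concretely, I would fix a rank-one MAK system $(\mathscr{N}, K)$ and its homogeneous PL-quotient $(\mathscr{N}, K_{PLK})$ with $F_{PLK} = F_{MAK} - F_\beta$, and show that the structural stable ACR criterion of Meshkat et al.\ characterizes stable ACR for $(\mathscr{N}, K_{PLK})$ exactly as it does for $(\mathscr{N}, K)$.

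First I would establish PFF-equivalence of the two kinetics. Writing $K_q(x) = k_q x^{F_{MAK,q}}$ and $(K_{PLK})_q(x) = k_q x^{F_{MAK,q} - \beta}$, the quotient is
\[
\frac{(K_{PLK})_q(x)}{K_q(x)} = x^{-\beta},
\]
which is a positive function of $x$ alone, independent of the reaction $q$. Hence the two kinetics are PFF-equivalent, and Proposition \ref{PFFresult} yields $E_+(\mathscr{N}, K_{PLK}) = E_+(\mathscr{N}, K)$. Since ACR in a species is a property of the positive equilibrium set only, $(\mathscr{N}, K_{PLK})$ has ACR in a species $A_{i^*}$ if and only if $(\mathscr{N}, K)$ does.

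The harder part is transferring stability, as the two systems generate different ODEs. Here the key observation is that the scalar $x^{-\beta}$ factors out of the entire species formation rate function: because $K_{PLK}(c) = c^{-\beta} K(c)$ with $c^{-\beta} > 0$ and $N$ is linear, the PL-quotient vector field satisfies $f_{PLK}(c) = c^{-\beta} f(c)$. I would then argue that multiplying a vector field by a positive scalar function preserves both the equilibrium set and the stability type of each equilibrium. At any $c^* \in E_+(\mathscr{N}, K)$ the product rule gives $Df_{PLK}(c^*) = (c^*)^{-\beta}\, Df(c^*)$, since the term involving the gradient of $c^{-\beta}$ is annihilated by $f(c^*) = 0$. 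As $(c^*)^{-\beta} > 0$, the Jacobians restricted to the common stoichiometric subspace differ only by a positive scalar, so their eigenvalues have the same signs of real parts and the stability classification coincides.

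Combining these steps, $(\mathscr{N}, K_{PLK})$ has stable ACR if and only if $(\mathscr{N}, K)$ does. Finally I would invoke Theorem \ref{meshkatthm} for the rank-one mass action system $(\mathscr{N}, K)$: the arrow-diagram condition and the reactant-complex condition there refer only to the network $\mathscr{N}$, which is shared by both systems, so the stable ACR criterion characterizes stable ACR for the PL-quotient exactly as for the parent. The main obstacle I anticipate is making the stability-transfer step fully rigorous relative to the stoichiometric compatibility class, that is, ensuring the positive-scalar rescaling preserves genuine asymptotic stability and not merely the signs of linearized eigenvalues; the proportionality $f_{PLK}(c) = c^{-\beta} f(c)$ reduces this to a standard time-reparametrization argument in which the two flows share the same orbits up to a positive change of time scale.
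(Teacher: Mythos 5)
Your proposal is correct and follows essentially the same route as the paper: establish PFF-equivalence of $K$ and $K_{PLK}$ via the reaction-independent factor $x^{-\beta}$, invoke Proposition \ref{PFFresult} to get $E_+(\mathscr{N},K)=E_+(\mathscr{N},K_{PLK})$, and conclude that the ACR species coincide. You in fact go further than the paper's own proof, which stops at the equality of equilibria sets: your explicit stability-transfer step via $f_{PLK}(c)=c^{-\beta}f(c)$, with the Jacobian at an equilibrium rescaled by the positive scalar $(c^*)^{-\beta}$, supplies the justification for preserving \emph{stable} ACR that the paper leaves implicit.
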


\begin{proof}
Let $(\mathscr{N}, K_{PLK})$ be a homogeneous PL quotient of a rank-one MAK system $(\mathscr{N}, K)$ where $(\mathscr{N}, K)$  satisfies the stable ACR criterion. For each species, write  $X_i^{\alpha_i} = X_i^{\beta_i}(X_i^{\alpha_i - \beta_i})$. For a reaction $q$, each $K_q(x) = k_q \prod X_i^{\beta_i} \prod (X_i^{\alpha_i - \beta_i})$. Note that $\prod X_i^{\beta_i}$ is independent of the reaction $q$ and hence, from Definition \ref{PFF}, $K_{PLK}$ and $K$ are PFF-equivalent. From Proposition \ref{PFFresult},  $E_+(\mathscr{N}, K) =  E_+(\mathscr{N}, K_{PLK})$. Thus, their ACR coincide and, hence the stable ACR criterion also holds for $(\mathscr{N}, K_{PLK})$.
\end{proof}

\section{A necessary condition for ACR in multistationary rank-one kinetic systems}

In this section, we present a necessary condition for the occurrence of ACR species in any multistationary rank-one kinetic system. We introduce the class of co-conservative kinetic systems and show the multistationary rank-one systems of this kind do not have any ACR species.  We further illustrate the condition for a number of multistationary rank-one mass action systems, using a recently presented classification of such systems by Pantea and Voitiuk \cite{voitiuk}. 

\subsection{The necessary condition for ACR in rank-one stationary kinetic systems}

The necessary condition is the following fundamental observation:

\begin{theorem}
\label{4.1.1}
    If $\mathscr N$ has rank one and $\left (\mathscr N, K\right )$ is multistationary, then (the line) $S$ lies in the species hyperplane of every ACR species $X$. In other words, for any basis vector $v$ of $S$, its $X$-coordinate is $0$.
\end{theorem}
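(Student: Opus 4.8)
The plan is to extract from multistationarity two distinct positive steady states lying in a common stoichiometric compatibility class, and then to use the rank-one hypothesis to pin down the single coordinate that ACR forces to be constant.

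First I would recall that the species formation rate function satisfies $f(c) = NK(c)$, so that every value of $f$ is a combination of reaction vectors and hence lies in $S$; consequently the trajectories of $dc/dt = f(c)$ are confined to cosets $c_0 + S$, the stoichiometric compatibility classes. Multistationarity then supplies two distinct positive steady states $c^* , c^{**} \in E_+(\mathscr N, K)$ in the same compatibility class, so that their difference $w := c^{**} - c^*$ is a \emph{nonzero} element of $S$.

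Next I would bring in the rank-one hypothesis. Since $\dim S = 1$, fix a basis vector $v$ with $S = \text{span}\{v\}$ and write $w = \lambda v$. Because $c^* \neq c^{**}$ we have $w \neq 0$, hence $\lambda \neq 0$. Finally I would invoke ACR: if $X$ is an ACR species then $c^*_X = c^{**}_X$ by definition, so the $X$-coordinate of $w$ vanishes, $w_X = 0$. Combining $w = \lambda v$ with $\lambda \neq 0$ gives $v_X = 0$, and since $v$ spans $S$, every element of $S$ has zero $X$-coordinate; that is, $S$ lies in the species hyperplane $\{c : c_X = 0\}$, which is exactly the claim.

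The only delicate point is the very first step: one must ensure that multistationarity delivers two steady states whose difference lies in $S$, i.e. that they sit in the same compatibility class. If ``multistationary'' were read merely as ``possessing at least two positive steady states,'' with no compatibility-class constraint, then $c^{**}-c^*$ need not lie in $S$ and the argument would collapse. I would therefore state explicitly at the outset that multistationarity is taken in the standard sense of two distinct positive equilibria within a single stoichiometric compatibility class; once that convention is fixed, every subsequent step is immediate and the proof is essentially a one-line coordinate computation.
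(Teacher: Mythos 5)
Your proof is correct and follows essentially the same route as the paper's: multistationarity yields two distinct positive equilibria in a common stoichiometric class, their nonzero difference spans the one-dimensional $S$, and ACR forces its $X$-coordinate to vanish. Your explicit caveat about reading ``multistationary'' as requiring the two equilibria to share a stoichiometric compatibility class is exactly the convention the paper's proof uses.
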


\begin{proof}
Since $\left (\mathscr N, K\right )$ is multistationary, there is a stoichiometric class that contains two distinct positive equilibria $x_1$ and $x_2$. In other words, $x_1 - x_2 \in S$ and $x_1 - x_2 \neq 0$. For any ACR species $X$ of $\left (\mathscr N, K\right )$, it follows that the $X$-th coordinate of $x_1 - x_2 $ is $0$. Since $x_1 - x_2 \neq 0$, it is a basis vector for the one-dimensional subspace $S$ of the rank-one system, proving the claim.
\end{proof}

The necessary condition immediately leads to an upper bound for the number of ACR species in a multistationary rank-one system:

\begin{corollary}
\label{cor1}
Let $m_{\textnormal{ACR}}$ be the number of $\textnormal{ACR}$ species of a rank-one multistationary system $\left (\mathscr N, K\right )$. Then $m_{\textnormal{ACR}} \leq m - |\textnormal{supp\;}v|$, where $\textnormal{supp\;}v$ is the support of any basis vector of $S$.
\end{corollary}

 Note that the right-hand side of the inequality in Corollary \ref{cor1} is just the number of zeros in $v$. Now, recall that a network is called conservative if the orthogonal complement of $S$ contains a positive vector. We hence define the following term.

\begin{definition}
A network is called \textbf{co-conservative} if $S$ contains a positive vector. 
\end{definition}

In general, a positive vector in $S$ can be a linear combination of reaction vectors with $0$ or negative coordinates. For example, for $m=3$, the reactions $X_1 \rightarrow X_1 + X_2$ and $2X_1 \rightarrow 3X_1 + 2X_3$ have the reaction vectors $(0,1,-1)$ and $(1, 0, 2)$ whose sum is $(1,1,1)$. In rank-one networks however, a positive vector requires the occurrence of a positive reaction vector $y'- y$. This in turn  implies a reaction $y \rightarrow y'$ with the following characteristics:
\begin{itemize}
    \item all species occur in the product complex;
    \item each species has a higher stoichiometric coefficient in the product complex compared to the corresponding species in the reactant complex; and
    \item no enzymatic regulation on a reaction.
\end{itemize}

\begin{corollary}
    If a rank-one and co-conservative network is multistationary, then it has no $\textnormal{ACR}$ species.
\end{corollary}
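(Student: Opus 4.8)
The plan is to obtain this corollary as an immediate consequence of the necessary condition already in hand, specifically the quantitative bound in Corollary \ref{cor1}. The whole point is that co-conservativity, \emph{in the rank-one setting}, forces the stoichiometric subspace to be spanned by a strictly positive vector, and such a vector has full support.

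First I would invoke the rank-one hypothesis: since $\dim S = 1$, every nonzero element of $S$ is a basis vector. By co-conservativity, $S$ contains a positive vector $v$, and because $v \neq 0$, this $v$ is itself a basis vector of $S$. As every coordinate of $v$ is strictly positive, we have $\textnormal{supp\;}v = \mathscr{S}$, hence $|\textnormal{supp\;}v| = m$.

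Next I would apply Corollary \ref{cor1} using this particular basis vector $v$, which yields $m_{\textnormal{ACR}} \leq m - |\textnormal{supp\;}v| = m - m = 0$. Since $m_{\textnormal{ACR}} \geq 0$, it must equal $0$, so the system has no ACR species. (Equivalently, one could argue directly from Theorem \ref{4.1.1}: any ACR species $X$ would require the $X$-coordinate of the basis vector $v$ to vanish, contradicting the positivity of $v$.) There is no genuine obstacle here; the entire content lies in recognizing that in a one-dimensional $S$ a positive vector is automatically a basis vector whose support is the whole species set, after which the previously established corollary closes the argument at once.
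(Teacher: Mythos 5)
Your argument is correct and matches the paper's own proof: both identify the positive vector guaranteed by co-conservativity as a basis vector of the one-dimensional $S$, note that its support is all of $\mathscr{S}$, and conclude $m_{\textnormal{ACR}} \leq m - m = 0$ via Corollary \ref{cor1}. No issues.
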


\begin{proof}
    Any non-zero vector $v$ in $S$ has only positive or negative coordinates, i.e., $|\textnormal{supp\;}v|=m$. Hence, $m_{\textnormal{ACR}}=0$. 
\end{proof}

\subsection{Examples from the multistationary rank-one mass action systems}

Pantea and Voitiuk introduced a complete classification of multistationary rank-one mass action systems in \cite{voitiuk}. The following table provides an overview of the eight classes that they identified.

\begin{table}[H]
  \centering

  \begin{tabularx}{\linewidth}{|X|X|}
    \hline
    \textbf{Network} & \textbf{Definition} \\ \hline
    Class 1-alt$^c$: 1-alt complete networks & has a 1D projection containing both $(\leftarrow,\rightarrow)$ and $(\rightarrow,\leftarrow)$ patterns  \\
    \hline
    Class 2-alt: 2-alternating & has a 1D projection containing both $(\leftarrow,\rightarrow,\leftarrow)$ and $(\rightarrow,\leftarrow,\rightarrow)$ patterns   \\
    \hline
    Class $Z$: zigzag network & has a 2D projection containing a zigzag  \\
    \hline
    Class $S_1$: one-source networks & contains exactly one source complex and two reactions of opposite directions \\
    \hline
    Class $S^z_2$ : two-source zigzag networks & has two species, contains exactly two source complexes and has a zigzag of slope -1 \\ \hline
    Class $L$: line networks & has two species and at least three source complexes satisfying some properties* \\ \hline
    Class $S^{nz}_2$ : two-source non-zigzag networks & an essential network that contains exactly two source complexes and $N \in$ 1-alt$^c - Z$. \\    \hline
    Class $C$: corner networks & an essential, 1-alt complete network that contains at least three source complexes satisfying some properties* \\
    \hline
  \end{tabularx}

\caption{Classes of rank-one networks (for details concerning the concepts and symbols used here, the readers are referred to \cite{voitiuk}).}
\label{rankonenet}
\end{table}

Under mass-action, our running example is a rank-one zigzag network but not a line, corner and two-source zigzag network. We illustrate Theorem \ref{4.1.1} with two examples from different classes of multistationary rank-one systems identified in \cite{voitiuk}.

\begin{example}[Class 1-alt$^c$]
The rank-one network below was the focus of Example 4.2 in \cite{voitiuk}. It was shown in that paper that it has the capacity for multiple positive and non-degenerate equilibria. Moreover, the network has stoichiometric subspace generated only by the vector $v=(1,1,1,0,-1,-1)$.
\begin{equation}\nonumber
    \begin{array}{cl}
         R_1:& B+2C+2E \rightarrow A+2B+3C+E \\
         R_2:& 2A+2B+C+2D+E \rightarrow A+B+2D+2E\\
         R_3:& A+3C+D+2E \rightarrow 2A+B+4C+D+E\\
         R_4:& 3A+3B+C+E \rightarrow 2A+2B+2E\\
    \end{array}
\end{equation}
\end{example}

\begin{example}[Class 2-alt]
The rank-one network below is a 2-alternating network with stoichiometric subspace generated  by the vector $v=(2,1,0)$. From Corollary 4.3 in \cite{voitiuk}, we conclude that it has the capacity for multiple positive and non-degenerate equilibria.
\begin{equation}\nonumber
    \begin{array}{cl}
         R_1:& 2X + 2Y \rightarrow Y \\
         R_2:& 3X + Y \rightarrow 5X + 2Y\\
         R_3:& 4X + 2Y + Z \rightarrow Z \\
         R_4:& 4X + 2Y + 2Z \rightarrow 2X + Y + 2Z\\
    \end{array}
\end{equation}
\end{example}

%We illustrate the main result in this subsection through a kinetic system given in the paper of Pantea and Voitiuk \cite{voitiuk}. The rank-one network below was the focus of Example 4.2 in \cite{voitiuk}. It was shown in that paper that it has the capacity for multiple positive and non-degenerate equilibria.

% The system' ODEs are as follows:

% \begin{equation} \nonumber
% \begin{array}{cl}
%     \dot{A}= & {r_1}BC^2E^2-{r_2}A^2B^2CD^2+{r_3}AC^3DE^2-{r_4}A^3B^3CE \\
%     \dot{B}= & {r_1}BC^2E^2-{r_2}A^2B^2CD^2+{r_3}AC^3DE^2-{r_4}A^3B^3CE \\ 
%     \dot{C}= & {r_1}BC^2E^2-{r_2}A^2B^2CD^2+{r_3}AC^3DE^2-{r_4}A^3B^3CE \\ 
%     \dot{D}= & 0 \\ 
%     \dot{E}= & -{r_1}BC^2E^2+{r_2}A^2B^2CD^2-{r_3}AC^3DE^2+{r_4}A^3B^3CE 
% \end{array}
% \end{equation}

Direct computations show that the preceding systems both have ACR in species $D$ and $Z$, respectively.  That is, their basis vectors of $S$ have $0$ coordinate in species $D$ and $Z$, respectively. This illustrates Theorem \ref{4.1.1}. 

%In \cite{voitiuk}, Pantea and Voitiuk classified rank-one mass action systems into 8 different classes based on reaction structure as summarized th following table:

%\includegraphics[]{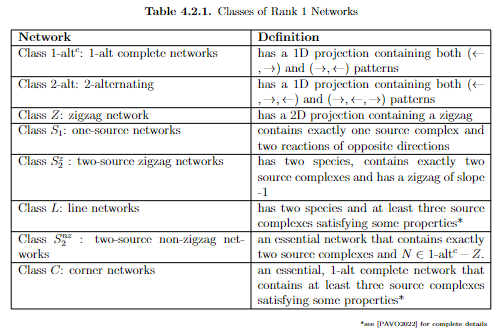}

%{\textcolor{red}{TO BE DONE in the next version-We are currently checking which of the general classes identified above satisfy the requirements of Theorem \ref{4.1.1}.}} 

\section{ACR and equilibria variation in kinetic systems}

In this section, we briefly discuss the relationship between concentration robustness and equilibria variability in general by introducing the concept of (positive) equilibria variation. We consider several examples to illustrate its use. We assume throughout the section that the kinetic system $(\mathscr N, K)$ is positively equilibrated, i.e., $E_+(\mathscr N, K) \neq \varnothing$.

\subsection{The equilibria variation of a kinetic system}

The motivation for the following definition comes from the observation that a kinetic system has a unique (positive) equilibrium in species space if and only if it possesses absolute concentration robustness in each of its species. 

\begin{definition}
The \textbf{(positive) equilibria variation} of a kinetic system $(\mathscr N, K)$ is the number of non-ACR species divided by the number of species, i.e., 
\begin{equation*}
    v_+(\mathscr N, K) = \dfrac{m-m_{ACR}}{m}.
\end{equation*}
\end{definition}

Clearly, the variation values lie between $0$ and $1$, and the following proposition characterizes the attainment of the extreme values:

\begin{proposition} Let $E_+(\mathscr N, K)$ be a kinetic system. Then,

\begin{enumerate}[i.]
    \item $v_+(\mathscr N, K) = 0 \Leftrightarrow |E_+(\mathscr N, K)| = 1$ and
    \item $v_+(\mathscr N, K) = 1  \Leftrightarrow (\mathscr N, K)$ has no ACR species
\end{enumerate}
\end{proposition}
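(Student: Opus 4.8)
The plan is to prove both biconditionals by directly unwinding the definition of the equilibria variation $v_+(\mathscr N, K) = (m - m_{ACR})/m$ together with the definition of an ACR species. Since $m > 0$, this fraction vanishes exactly when its numerator does and equals $1$ exactly when $m_{ACR} = 0$; thus the arithmetic reduces each statement to a claim about $m_{ACR}$, and the only substantive point is relating $m_{ACR}$ to the cardinality of $E_+(\mathscr N, K)$. Throughout I would invoke the standing assumption of this section that $E_+(\mathscr N, K) \neq \varnothing$.

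For part (ii), I would observe that $v_+(\mathscr N, K) = 1$ is equivalent to $m - m_{ACR} = m$, i.e.\ $m_{ACR} = 0$, which is by definition the statement that $(\mathscr N, K)$ has no ACR species. This direction is immediate and needs no further argument.

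For part (i), the arithmetic gives $v_+(\mathscr N, K) = 0 \Leftrightarrow m_{ACR} = m$, so the key step is the equivalence $m_{ACR} = m \Leftrightarrow |E_+(\mathscr N, K)| = 1$. For the forward implication, suppose every species is an ACR species. Then for any two equilibria $c^*, c^{**} \in E_+(\mathscr N, K)$ and every species $X$ we have $c^*_X = c^{**}_X$ by the definition of ACR, so $c^* = c^{**}$ as vectors in $\mathbb R^{\mathscr S}_{>0}$; since $E_+(\mathscr N, K)$ is nonempty by assumption, it must be a singleton. For the converse, if $E_+(\mathscr N, K) = \{c^*\}$, then trivially all positive equilibria agree in every coordinate, so every species is an ACR species and $m_{ACR} = m$.

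The argument is essentially bookkeeping, so there is no serious obstacle; the one point requiring care is the role of the standing hypothesis $E_+(\mathscr N, K) \neq \varnothing$. Without it, the condition ``every species is an ACR species'' could hold vacuously when $E_+(\mathscr N, K)$ is empty, while $|E_+(\mathscr N, K)| = 0 \neq 1$. The nonemptiness assumption is precisely what excludes this degenerate case and secures the forward direction of (i).
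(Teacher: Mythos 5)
Your proof is correct and matches the paper's approach: the paper simply states that ``the proofs follow directly from the definition,'' and your write-up is exactly that unwinding, including the correct use of the standing assumption $E_+(\mathscr N, K) \neq \varnothing$ to rule out the vacuous case in part (i).
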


The proofs follow directly from the definition. We have the following corollary for any multistationary system:

\begin{corollary} 
Let $(\mathscr N, K)$ be a kinetic system.

\begin{enumerate} [a.]
    \item If $(\mathscr N, K)$ is multistationary, then $v_+(\mathscr N, K) \geq \dfrac{1}{m}$. In particular, if $v_+(\mathscr N, K) = 0$, then $(\mathscr N, K)$ is monostationary.
    \item If $\mathscr N$ is open and  $v_+(\mathscr N, K) > 0$, then $(\mathscr N, K)$ is multistationary.
\end{enumerate}
\end{corollary}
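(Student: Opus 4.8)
The plan is to obtain both statements directly from the definition $v_+(\mathscr N, K) = (m - m_{\textnormal{ACR}})/m$ together with the preceding Proposition, leaning on two elementary bridges: first, that a species $X$ fails to be an ACR species precisely when some two positive equilibria disagree in their $X$-coordinate (immediate from the definition of ACR, using $E_+(\mathscr N, K) \neq \varnothing$); and second, that since $m_{\textnormal{ACR}}$ is an integer with $0 \leq m_{\textnormal{ACR}} \leq m$, the quantity $v_+(\mathscr N, K)$ can only take the values $0, 1/m, 2/m, \dots, 1$. With these in hand the argument becomes almost purely logical.

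For part (a), I would argue as follows. If $(\mathscr N, K)$ is multistationary, then in the sense used in the proof of Theorem \ref{4.1.1} there is a stoichiometric class containing two distinct positive equilibria, so $|E_+(\mathscr N, K)| \geq 2$. The preceding Proposition states $v_+(\mathscr N, K) = 0 \Leftrightarrow |E_+(\mathscr N, K)| = 1$, whence $v_+(\mathscr N, K) \neq 0$. Since the admissible values of $v_+$ are multiples of $1/m$, the smallest nonzero one is $1/m$, giving $v_+(\mathscr N, K) \geq 1/m$. The ``in particular'' clause is then just the contrapositive: if $v_+(\mathscr N, K) = 0$ then $|E_+(\mathscr N, K)| = 1$, so no stoichiometric class can contain two distinct positive equilibria and the system is monostationary. (Alternatively one can argue directly: if every species were ACR, then the two equilibria guaranteed by multistationarity would share every coordinate and hence coincide, a contradiction.)

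For part (b), the hypothesis $v_+(\mathscr N, K) > 0$ means $m_{\textnormal{ACR}} < m$, so at least one species $X$ is non-ACR. By the first bridge there exist $c^*, c^{**} \in E_+(\mathscr N, K)$ with $c^*_X \neq c^{**}_X$, and in particular $c^* \neq c^{**}$. This already produces two distinct positive equilibria; what remains is to place them in a common stoichiometric compatibility class, which is the content of multistationarity. This is exactly where openness enters: when $\mathscr N$ is open its stoichiometric subspace is all of $\mathbb R^{\mathscr S}$, so the entire positive orthant $\mathbb R^{\mathscr S}_{>0}$ forms a single stoichiometric class. Consequently $c^*$ and $c^{**}$ are automatically compatible, and $(\mathscr N, K)$ is multistationary.

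The main obstacle I anticipate is not computational but definitional, and it lives entirely in part (b): one must resist conflating ``possesses two distinct positive equilibria'' with ``is multistationary,'' the latter being a statement relative to a single compatibility class. The sole purpose of the openness hypothesis is to collapse this distinction by forcing $S = \mathbb R^{\mathscr S}$ and thereby a unique positive stoichiometric class. Since the excerpt invokes ``open'' without a formal definition, I would make this property explicit at the point of use, as it is precisely the step that converts the two distinct equilibria from a non-ACR species into genuine multistationarity.
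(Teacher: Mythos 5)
Your proposal is correct and follows essentially the same route as the paper: for (a), multistationarity gives two distinct positive equilibria, hence at least one non-ACR species and $m_{\mathrm{ACR}} \leq m-1$, i.e.\ $v_+ \geq 1/m$; for (b), openness forces a single stoichiometric class, so the two distinct equilibria produced by a non-ACR species already constitute multistationarity. Your added care in (b) about distinguishing ``two distinct equilibria'' from ``multistationary within one compatibility class'' is exactly the point the paper's one-line proof relies on.
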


\begin{proof}
    For $(a)$: Multistationarity implies at least two distinct positive equilibria, hence $m_{ACR} \leq m - 1$, and the claims follow. For $(b)$: if the network is open, then there is only one stoichiometric class. Hence, multistationarity is equivalent to the occurrence of at least two distinct equilibria in species space.
\end{proof}

\begin{example}
    Schmitz's model of the earth's pre-industrial carbon cycle system was analyzed by Fortun et al. in \cite{fort3.5}. Below is its corresponding reaction network.

    \begin{equation}
    \label{schmitz} 
    \begin{tikzcd}
M_5 \arrow[dd, "R_1 "'] \arrow[rd, "R_2", shift left] &                                                                                                    & M_2 \arrow[ld, "R_5"', shift right] \arrow[rd, "R_{11}"', shift right] \arrow[dd, "R_9"'] &                                                                           \\
                                                      & M_1  \arrow[lu, "R_3", shift left] \arrow[ru, "R_6"', shift right] \arrow[rd, "R_8"', shift right] &                                                                                           & M_4 \arrow[lu, "R_{10}"', shift right] \arrow[ld, "R_{12}"', shift right] \\
M_6 \arrow[ru, "R_4"']                                &                                                                                                    & M_3 \arrow[lu, "R_7"', shift right] \arrow[ru, "R_{13}"', shift right]                    &                                                                         
\end{tikzcd}
\end{equation}

\noindent In this network, $M_1, M_2, M_3, M_4, M_5,$ and $M_6$ stand for atmosphere, warm ocean surface waters, cool ocean surface waters, deep ocean waters, terrestrial biota, and soil and detritus, respectively. Important numbers of the network as well as its kinetic order matrix are given in the following.

\begin{figure}
\centering
\includegraphics[width=0.9\columnwidth]{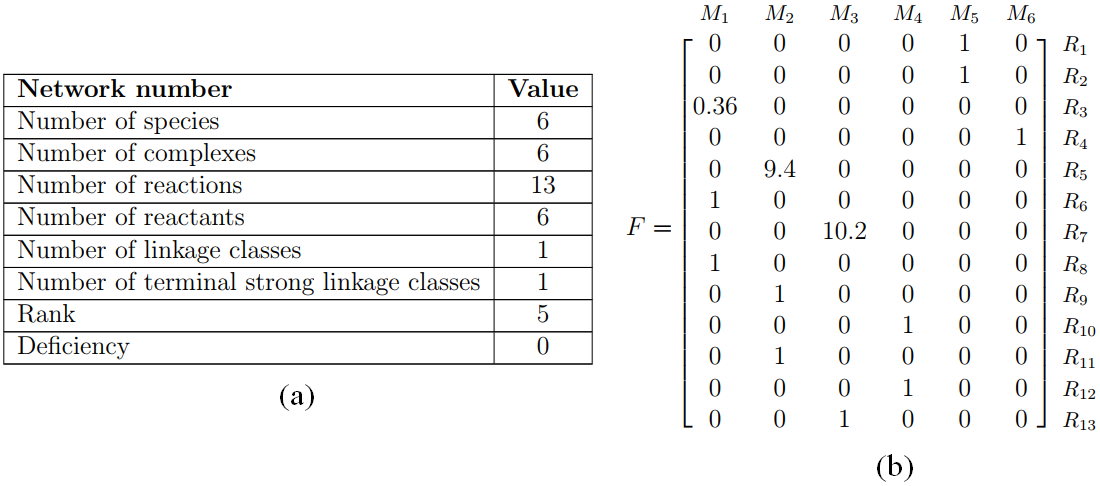}
\caption{(a) Some numbers related to the reaction network of Schmitz's model of the earth's pre-industrial carbon cycle system; (b) kinetic order matrix of the network.} 
\label{schmitz1} 
\end{figure}

\pagebreak

Fortun and Mendoza \cite{fort3.5} showed that the system has no ACR species, i.e., $v_+(\mathscr N, K) = 1 > \dfrac{1}{6}$. Since the system has a conservative, concordant, and weakly reversible network and weakly monotonic kinetics, by Theorem 6.6 of \cite{shinar}, it has a unique positive equilibrium in each stoichiometric class and consequently monostationary. Note also that the network has rank $5 < 6$, hence it is a closed network.
\end{example}

If the multistationary kinetic system has rank one, Theorem \ref{4.1.1}  provides a better lower bound for the equilibria variation.

\begin{proposition}
    Let $(\mathscr N, K)$ be a multistationary rank-one kinetic system and $v$ a basis vector of $S$. Then,  $v_+(\mathscr N, K) \geq \dfrac{|supp(v)|}{m}$.
\end{proposition}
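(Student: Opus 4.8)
The plan is to obtain the inequality directly from the apparatus already built in Section 4.1, since it is nothing more than Corollary \ref{cor1} re-expressed through the definition of equilibria variation. First I would record the hypotheses: as $(\mathscr N, K)$ is multistationary and of rank one, Theorem \ref{4.1.1} applies, so for the basis vector $v$ of the one-dimensional stoichiometric subspace $S$, every ACR species $X$ satisfies $v_X = 0$. Equivalently, each ACR species lies outside $\supp(v)$, so the collection of ACR species is contained in the complementary set of the $m - |\supp(v)|$ coordinates on which $v$ vanishes.

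Reading off cardinalities from this containment gives $m_{\text{ACR}} \leq m - |\supp(v)|$, which is precisely Corollary \ref{cor1}. The final step is purely arithmetic: substitute this bound into $v_+(\mathscr N, K) = (m - m_{\text{ACR}})/m$. Because $m_{\text{ACR}}$ enters with a minus sign, the upper bound on $m_{\text{ACR}}$ converts into a lower bound on $v_+$, namely
$$v_+(\mathscr N, K) = \frac{m - m_{\text{ACR}}}{m} \;\geq\; \frac{m - \left(m - |\supp(v)|\right)}{m} = \frac{|\supp(v)|}{m},$$
which is the assertion.

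I do not anticipate any real obstacle, as the mathematical content is carried entirely by Theorem \ref{4.1.1} and Corollary \ref{cor1}; the proposition is best viewed as their translation into the language of equilibria variation. The only subtleties worth flagging are that $S$ is genuinely one-dimensional (so $v$ is determined up to a scalar and $|\supp(v)|$ is well defined, being invariant under rescaling), and that the statement therefore does not depend on the particular basis vector chosen. I would also remark that this bound refines the generic estimate $v_+(\mathscr N, K) \geq 1/m$ available for any multistationary system, the improvement being strict exactly when $|\supp(v)| \geq 2$.
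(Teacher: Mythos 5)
Your argument is correct and matches the paper's own proof essentially verbatim: both derive $m_{\text{ACR}} \leq m - |\supp(v)|$ from Theorem \ref{4.1.1} (i.e., Corollary \ref{cor1}) and then substitute into the definition of $v_+(\mathscr N, K)$, noting that $|\supp(v)| \geq 1$ makes this a genuine improvement on the bound $1/m$. No gaps.
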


\begin{proof}
    According to Theorem \ref{4.1.1},  $m_{ACR} \leq m - |supp(v)| \Leftrightarrow |supp(v)| \leq m - m_{ACR}$, leading to the new lower bound. Note that since $v$ is a basis vector, $|supp(v)|\geq 1$, confirming the improvement.
\end{proof}

One can derive a lower bound for equilibria variation for any kinetic system using the general species hyperplane criterion for ACR introduced by Hernandez and Mendoza in \cite{hernandez}. It is based on the following considerations:

%Let $(\mathscr N, K)$ be a kinetic system, with $\mathscr N=(\mathscr S, \mathscr C, \mathscr R)$ and $m$ is the number of species.

\begin{definition}
    For any species $X$, the $(m-1)$-dimensional subspace
    \begin{equation}
    \nonumber
        H_X:=\{x\in \mathbb R^{\mathscr S}| x_X=0\}
    \end{equation}
    is called the \textbf{species hyperplane} of $X$.
\end{definition}

For $U$ containing $\mathbb R_{>0}$, let $\phi: U\rightarrow \mathbb R$ be an injective map, i.e., $\phi: U\rightarrow \textnormal{Im}~\phi$ is a bijection. By component-wise application ($m$ times), we obtain a bijection $U^\mathscr S \rightarrow \mathbb R^{\mathscr S}$, which we also denote with $\phi$. We formulate our concepts for any subset $Y$ of $E_+(\mathscr N, K)$, although we are mainly interested in $Y=E_+(\mathscr N, K)$.

\begin{definition}
    For a subset $Y$ of $E_+(\mathscr N, K)$, the set 
    \begin{equation}
    \nonumber
    \Delta_{\phi}Y:=\{\phi(x)-\phi(x')| x,x'\in Y\}
    \end{equation}
    is called the \textbf{difference set of $\phi$-transformed equilibria} in $Y$, and its span $\langle     \Delta_{\phi}Y \rangle$ the difference space of $\phi$-transformed equilibria in $Y$.
\end{definition}
 
In Proposition 6 of \cite{hernandez}, it is shown that 
    \begin{equation}
    \nonumber
     m_{ACR} \leq m - \dim \langle \Delta_{\phi}E_+\rangle.
    \end{equation}		
 
\noindent It follows immediately that we have the following lower bound for the equilibria variation.

\begin{proposition}
    Let $(\mathscr N, K)$ be a kinetic system. Then, $\dfrac {\dim \langle \Delta_{\phi}E_+\rangle}{m}  \leq v_+(\mathscr N, K)$.
\end{proposition}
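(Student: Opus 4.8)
The plan is to derive the inequality as an immediate consequence of the dimension bound quoted just above the statement, namely the inequality $m_{ACR} \leq m - \dim \langle \Delta_{\phi}E_+\rangle$ established as Proposition 6 of \cite{hernandez}, combined with the definition of the equilibria variation $v_+(\mathscr N, K) = (m - m_{ACR})/m$. In other words, the whole content of the proposition is the observation that normalizing the Hernandez--Mendoza bound by the number of species $m$ turns an upper bound on $m_{ACR}$ into a lower bound on $v_+(\mathscr N, K)$.

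First I would take the cited bound and rearrange it by adding $\dim \langle \Delta_{\phi}E_+\rangle$ to both sides and subtracting $m_{ACR}$, isolating the dimension term as $\dim \langle \Delta_{\phi}E_+\rangle \leq m - m_{ACR}$. This is purely an algebraic transposition of the inequality already supplied in the excerpt, so no new structural input about the network or kinetics is needed.

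Next I would divide both sides of $\dim \langle \Delta_{\phi}E_+\rangle \leq m - m_{ACR}$ by $m$. Since $m$ denotes the number of species and is therefore a positive integer, the division preserves the direction of the inequality, giving $\dim \langle \Delta_{\phi}E_+\rangle / m \leq (m - m_{ACR})/m$. Finally I would identify the right-hand side with $v_+(\mathscr N, K)$ by its defining formula, which closes the argument.

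I do not expect any genuine obstacle: the statement is essentially a restatement of the quoted bound after normalization, and the only point requiring (trivial) attention is recording that $m > 0$ so that dividing through by $m$ leaves the inequality sense unchanged. The substantive mathematical work has already been done in \cite{hernandez}, and this proposition merely repackages it in the language of the equilibria variation $v_+$.
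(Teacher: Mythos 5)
Your proposal is correct and matches the paper exactly: the paper gives no separate proof, stating only that the bound follows immediately from the Hernandez--Mendoza inequality $m_{ACR} \leq m - \dim \langle \Delta_{\phi}E_+\rangle$, which is precisely the rearrangement and division by $m>0$ that you carry out.
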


In general, it is challenging to compute $\dim \langle \Delta_{\phi}E_+\rangle$. However, for PLP systems, this can be done, and in the next section, we apply it to compute equilibria variation for weakly reversible, deficiency zero PL-RDK systems.

\subsection{Equilibria variation in deficiency zero PL-RDK systems}

A kinetic system $(\mathscr N, K)$ is a PLP (\textbf{positively equilibrated-log parametrized}) system if there is a reference equilibrium $x^*$ and a flux subspace $P_E$ of $\mathbb R^{\mathscr S}$ such that $E_+(\mathscr N, K) = \{ x \in \mathbb R^{\mathscr S}_{>} | \log x - \log x^* \in P_E^{\perp}\}$. For any PLP system, Lao et al. \cite{lao} showed that ACR is characterized by the species hyperplane criterion for PLP systems: $X$ is an ACR species if and only if it is contained in $H_X := \{x \in \mathbb R_{\mathscr S} | x_X = 0\}$. This implies that $m - \dim P_E \leq m - m_{ACR}$ and, hence, $1 - \dfrac{\dim P_E}{m} \leq v_+(\mathscr N, K)$.

%To apply this to deficiency zero PL-RDK systems, we recall the following concepts and results. 

Jose et al. \cite{ACB2022} showed that a CLP (\textbf{complex balanced-log parametrized}) system, i.e., $Z_+(\mathscr N, K) = \{x\in \mathbb R^{\mathscr S}_> |\log x - \log x^* \in P_Z^{\perp}\}$ is absolutely complex balanced, i.e., $E_+(\mathscr N, K)  = Z_+(\mathscr N, K)$ if and only if it is both CLP and PLP and $P_E = P_Z$. It was shown in \cite{muller} that any complex balanced PL-RDK system is a CLP system with $P_Z = \tilde{S}$. Since after Feinberg, any weakly reversible deficiency zero kinetic system is absolutely complex balanced, then a weakly reversible deficiency zero PL-RDK system is a PLP system with $P_E = \tilde{S}$. Hence, for any such system, $1 - \dfrac{\Tilde{s}}{m} \leq v_+(\mathscr N, K)$, where $\Tilde{s}$ is the kinetic rank of the system.

\begin{example}
    Fortun and collaborators studied variants of the Anderies et al. \cite{anderies} model of the earth's pre-industrial carbon cycle (see the reaction network below) in \cite{noel} and \cite{fort3.5}. 
   
\end{example}

\begin{equation}
\label{anderies} 
    \begin{tikzcd}
A_1+2A_2 \arrow[r, "R_1"]         & 2A_1+A_2                          \\
A_1+A_2 \arrow[r, "R_2"]          & 2A_2                              \\
A_2 \arrow[r, "R_3 ", shift left] & A_3  \arrow[l, "R_4", shift left]
\end{tikzcd}
\end{equation}

\noindent Note that $A_1$, $A_2$, and $A_3$ here stand for land, atmosphere, and ocean, respectively. Given below are some important numbers and the kinetic order matrix of the network in \ref{anderies}.

\begin{figure}[H]
\centering
\includegraphics[width=0.9\columnwidth]{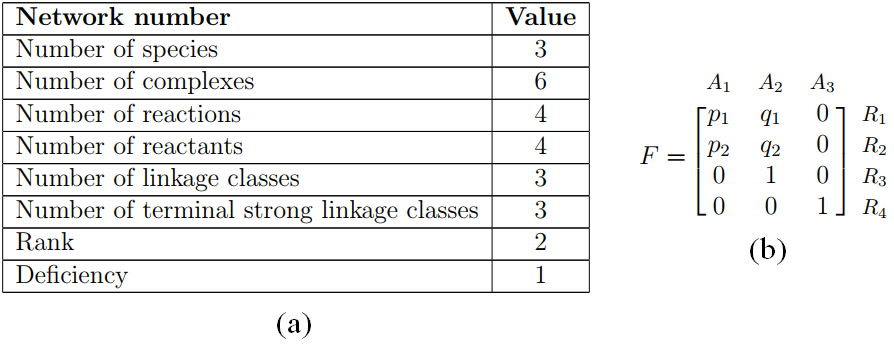}
\caption{(a) Some numbers related to the reaction network of the model of the earth's pre-industrial carbon cycle system given by Anderies et al. in \cite{anderies}; (b) kinetic order matrix of the network.} 
\label{schmitz2} 
\end{figure}

% \begin{center}
%     \centering
% \includegraphics[width=1.0\columnwidth]{pic_5.png}
% \end{center}

In \cite{fort3.5}, Fortun and Mendoza showed that the system is dynamically equivalent to a weakly reversible, deficiency zero system with $\Tilde{S}^{\perp} = \left \langle  \begin{pmatrix}
-1 & \dfrac{p_2-p_1}{q_2-q_1} & \dfrac{p_2-p_1}{q_2-q_1} 
\end{pmatrix}  \right \rangle$. Let $R$ denote the ratio $\dfrac{p_2 - p_1}{q_2 - q_1}$. Based on the value of $R$, one obtains three classes of Anderies system: $AND_> (R > 0)$ consists of multistationary systems with no ACR species, $AND_0 (R = 0)$ contains only monostationary systems with two ACR species and $AND_<$ contains both injective and non-injective systems but also no ACR species. The variants studied in [10] and [14] belong to $AND_>$ and $AND_0$, respectively.

Based on the previous results, we have $v_+(\mathscr N, K) = 1$ if $(\mathscr N, K)$ is contained in either $AND_>$ or $AND_<$, and $v_+(\mathscr N, K) = \dfrac{1}{3}$ if $(\mathscr N, K)$ is in $AND_0$.

\subsection{Equilibria variation of independent subnetworks}

In this section, we discuss equilibria variation for independent subnetworks. It is important in this regard to differentiate between the two concepts of subnetworks that we introduced in previous work: embedded and non-embedded. We hence begin with a review of these concepts and their properties relevant to absolute concentration robustness (ACR), which is the basis of our concept of equilibria variation. We then collect some relevant results from previous publications and add some new details. We conclude by using the results of the reaction network analysis of metabolic insulin signaling by Lubenia et al. \cite{lubenia} to illustrate the different concepts and relationships. We in fact show that the various inequalities between the different equilibria variation values are sharp, i.e., equality is achieved in various subnetworks of the insulin system. 

\subsubsection{Review of subnetwork properties}

In a subnetwork in a decomposition $\mathscr N=\mathscr N_1\cup \mathscr N_2\cup\cdots \cup\mathscr N_k$, often, a smaller number of species occurs than in the whole network. If $\mathscr N=(\mathscr S, \mathscr C, \mathscr R)$, we call a subnetwork $(\mathscr C', \mathscr R')$, where $\mathscr R'\subseteq \mathscr R$ and $\mathscr C'=\mathscr C|_{\mathscr R'}$, \textbf{embedded} if its species space is $\mathscr S$ and \textbf{non-embedded} if it has $\mathscr S|_{\mathscr C'}$ as species space, which we denote with $\mathscr S'$.

We use the embedded representation in a decomposition because it conveniently allows the set operations on equilibria sets. A basic fact is the following observation:

\begin{proposition}
    If $X$ is an ACR species of a subnetwork, then $X$ is an element of $\mathscr S'$.
\end{proposition}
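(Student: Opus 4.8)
The plan is to prove the contrapositive: if $X \notin \mathscr S'$, then $X$ is not an ACR species of the subnetwork. The point of the statement is to reconcile the two representations introduced just above. In the embedded representation the subnetwork carries the full species space $\mathscr S$, so a priori an ACR species could be any element of $\mathscr S$; the species in $\mathscr S \setminus \mathscr S'$ are exactly those that occur in the ambient network but in none of the complexes $\mathscr C' = \mathscr C|_{\mathscr R'}$ of the subnetwork. The proposition asserts that such ``phantom'' species never carry ACR, which is what makes the embedded and non-embedded notions of ACR coincide.

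First I would record the structural consequence of $X \notin \mathscr S'$: since every reactant and product complex of the subnetwork has zero $X$-coordinate, every reaction vector $y'_i - y_i$ of the subnetwork also has zero $X$-coordinate. Hence the $X$-row of the stoichiometric matrix $N$ of the embedded subnetwork vanishes, and the species formation rate function satisfies $f_X(c) = \sum_i K_i(c)(y'_i - y_i)_X = 0$ identically, so $x_X$ is conserved along the dynamics. Next I would use that the rate functions do not depend on $x_X$: for mass-action the kinetic order of $X$ equals its stoichiometric coefficient in the reactant complex, which is $0$ since $X \notin \mathscr S'$, so $K_i(c) = k_i\, c^{Y_{\cdot,\rho(i)}}$ contains no power of $c_X$. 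Combining the two facts, the positive equilibrium set $E_+$ of the embedded subnetwork is invariant under rescaling the $X$-coordinate: if $c^* \in E_+$ and $t > 0$, then the point obtained from $c^*$ by replacing $c^*_X$ with $t\,c^*_X$ is again positive and satisfies $f = 0$, hence lies in $E_+$. Therefore, whenever $E_+ \neq \varnothing$ the coordinate $x_X$ attains every positive value over $E_+$ and cannot meet the ACR condition $c^{**}_X = c^*_X$; and when $E_+ = \varnothing$ there is no witness $c^*$, so $X$ is again not an ACR species. Either way the contrapositive holds.

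I expect the one delicate point to be the step ``the rate functions do not depend on $x_X$''. It is transparent for mass-action and, more generally, for any power-law kinetics in which a species absent from all reactant complexes is assigned kinetic order $0$, as is the case for the reactant-determined (PL-RDK) systems used throughout the paper; I would state this dependence explicitly as the hypothesis under which the rescaling argument runs. A genuinely degenerate assignment $F_{iX} \neq 0$ with $X \notin \mathscr S'$ would break the invariance argument, but such kinetics lie outside the reactant-determined framework considered here, so no further case analysis is needed.
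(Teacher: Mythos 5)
The paper states this proposition as a bare observation with no proof, so your argument is filling a real gap rather than being compared against an existing one. Your contrapositive strategy is the right one, and the two structural facts you extract from $X\notin\mathscr S'$ --- that the $X$-row of the stoichiometric matrix of the subnetwork vanishes, and that (under your stated hypothesis) no rate function depends on $c_X$ --- do combine correctly: the rescaling $c^*_X\mapsto t\,c^*_X$ then preserves $f(c)=0$, so $c_X$ sweeps all of $\mathbb R_{>0}$ over $E_+(\mathscr N',K')$ whenever that set is nonempty, and the empty case is handled by the existential clause in the paper's ACR definition. For mass action kinetics this is a complete proof.

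The genuine gap is your closing claim that a ``degenerate'' assignment $F_{iX}\neq 0$ with $X\notin\mathscr S'$ ``lies outside the reactant-determined framework.'' It does not: PL-RDK only requires branching reactions to share identical kinetic order rows; it places no constraint tying $F_{iX}$ to the stoichiometric coefficient of $X$ in the reactant complex, and this literature explicitly allows nonzero kinetic orders for non-reactant species (regulatory interactions --- the paper itself mentions ``enzymatic regulation'' as a phenomenon within scope). Moreover, without your hypothesis the proposition itself is false, not merely your proof of it: take the subnetwork $\{A\rightarrow 2A,\ 2A\rightarrow A\}$ embedded in species space $\{A,B\}$ with rates $k_1A^pB^{q_1}$ and $k_2A^pB^{q_2}$, $q_1\neq q_2$. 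Then $B\notin\mathscr S'$, yet the equilibrium condition reduces to $B^{q_1-q_2}=k_2/k_1$, which pins $B$ to a single positive value while $A$ is free --- so $B$ is an ACR species of the subnetwork. You should therefore promote your ``delicate point'' from a remark to an explicit hypothesis (e.g.\ that $\supp F_{i,\cdot}\subseteq\supp y_i$ for every reaction of the subnetwork, which holds for MAK), rather than asserting that PL-RDK disposes of it; as stated, the last sentence of your proposal is where the argument fails.
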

   
This means that we need only one count of ACR species in a subnetwork, $m'_{ACR}$. For non-ACR species and equilibria variation, we have the following relationships:

\begin{proposition}
    Let $m'=|\mathscr S'|$ and $v_+(\mathscr N', K')$, $\Tilde{v}_+(\mathscr N', K')$ be the equilibria variations of the non-embedded and embedded subnetworks. Then,

    \begin{enumerate}[i.]
        \item $m-m'_{ACR}=(m'-m'_{ACR})+(m-m')$ and
        \item $0\leq \Tilde{v}_+(\mathscr N', K')-v_+(\mathscr N', K')\leq \dfrac{m-m'}{m}$
    \end{enumerate}    
\end{proposition}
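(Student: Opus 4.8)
The plan is to reduce both claims to elementary arithmetic by first writing out the two equilibria variations explicitly and then invoking two structural facts: that $\mathscr S' \subseteq \mathscr S$ forces $m' \le m$, and that the preceding proposition forces $m'_{ACR} \le m'$. The one genuinely conceptual point---already supplied by the remark immediately preceding the statement---is that the embedded and non-embedded representations share the \emph{same} ACR count $m'_{ACR}$: a species outside $\mathscr S'$ has zero reaction-vector component in the embedded dynamics and hence ranges freely over all positive equilibria, so it is never ACR, while for a species in $\mathscr S'$ the coordinates attained across the embedded equilibria coincide with those attained across the non-embedded equilibria. With this in hand, $\Tilde{v}_+(\mathscr N', K') = (m - m'_{ACR})/m$, since the embedded subnetwork carries all $m$ species, and $v_+(\mathscr N', K') = (m' - m'_{ACR})/m'$, since the non-embedded subnetwork carries only the $m'$ species of $\mathscr S'$.

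Part (i) is then a tautology: adding and subtracting $m'$ yields $m - m'_{ACR} = (m' - m'_{ACR}) + (m - m')$, which I would read as decomposing the embedded non-ACR count into the non-embedded non-ACR count plus the $m - m'$ species that embedding reintroduces, each of which is automatically non-ACR.

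For part (ii) I would compute the difference directly. Writing $a := m'_{ACR}$,
\[
\Tilde{v}_+(\mathscr N', K') - v_+(\mathscr N', K') = \frac{m - a}{m} - \frac{m' - a}{m'} = a\left(\frac{1}{m'} - \frac{1}{m}\right) = \frac{a}{m'}\cdot\frac{m - m'}{m}.
\]
The lower bound is immediate from $a \ge 0$ and $m \ge m'$, which make both factors nonnegative. For the upper bound I would use $a = m'_{ACR} \le m'$ from the preceding proposition, so that $a/m' \le 1$; multiplying this by the nonnegative factor $(m - m')/m$ shows the difference is at most $(m - m')/m$, as claimed.

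The main obstacle here is not computational---the algebra is routine---but lies in justifying that $m'_{ACR}$ is common to both representations and that $m'_{ACR} \le m'$, both of which are delivered by the preceding proposition and its accompanying remark, so the argument is short once those are cited. I would also flag the degenerate case $m = m'$, in which the embedded and non-embedded subnetworks coincide and both sides of (ii) collapse to $0$, and I would record the standing assumption that both subnetworks are positively equilibrated so that the variations are defined in the first place.
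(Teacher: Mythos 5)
Your proposal is correct and follows essentially the same route as the paper: part (i) is read as the same bookkeeping identity, and part (ii) reduces to the common-denominator expression $\Tilde{v}_+(\mathscr N', K')-v_+(\mathscr N', K')=\dfrac{m'_{ACR}(m-m')}{mm'}$, from which both bounds follow (the paper gets the upper bound by dividing (i) by $m$ and using $\tfrac{m'-m'_{ACR}}{m}\leq \tfrac{m'-m'_{ACR}}{m'}$, which is arithmetically equivalent to your use of $m'_{ACR}\leq m'$). Your explicit justification that $m'_{ACR}$ is common to both representations is a welcome extra detail that the paper leaves implicit.
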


\begin{proof}
$(i.)$ should be read as: the number of non-ACR species in an embedded network is the number of non-ACR species in the non-embedded network plus the number of non-occurring species, and $(ii.)$: dividing the equation in $(i.)$ by $m$ and using the inequality $\dfrac{m'-m'_{ACR}}{m}\leq \dfrac{m'-m'_{ACR}}{m'}$, we obtain $\Tilde{v}_+(\mathscr N', K')-v_+(\mathscr N', K')\leq \dfrac{m-m'}{m}$. On the other hand, the left-hand side, after forming a common denominator, is now equal to $\dfrac{m'_{ACR}(m-m')}{mm'}\geq 0$, since all factors are non-negative.
\end{proof}

\subsubsection{Equilibria variation of independent subnetworks}

In \cite{lao}, Proposition 4.4 states that if species $X$ has ACR in $\mathscr N_i$ and the decomposition is independent, then $X$ has ACR in $\mathscr N$, i.e., $\displaystyle |\mathscr S_{ACR, i}|\leq \left|  \bigcup_{i=1}^{k} \mathscr S_{ACR,i} \right| \leq |\mathscr S_{ACR}|$. We hence have the following corollary:

\begin{proposition}
    $v_+(\mathscr N, K)\leq \Tilde{v}_+(\mathscr N', K')$ for any independent subnetwork $\mathscr N'$.
\end{proposition}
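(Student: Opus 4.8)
The plan is to reduce the claim to the inequality chain established just before the statement. Recall that the Proposition asserts $v_+(\mathscr N, K)\leq \Tilde{v}_+(\mathscr N', K')$ for any independent subnetwork $\mathscr N'$, where $\Tilde{v}_+$ denotes the embedded equilibria variation. The key observation supplied in the excerpt (from Proposition 4.4 of \cite{lao}) is that for an independent decomposition, ACR in a subnetwork forces ACR in the whole network, yielding $|\mathscr S_{ACR,i}|\leq |\mathscr S_{ACR}|$. Translating this into the language of equilibria variation is essentially a matter of unwinding definitions, so I expect no genuinely hard step here.

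First I would fix the independent subnetwork $\mathscr N'$ and apply the containment from \cite{lao}: every species that is ACR in $\mathscr N'$ is also ACR in $\mathscr N$. Since ACR in the embedded subnetwork $\mathscr N'$ is counted by $m'_{ACR}$ and all these species occur among the $m$ species of $\mathscr N$ (using the earlier Proposition that an ACR species of a subnetwork lies in $\mathscr S'\subseteq\mathscr S$), this gives $m'_{ACR}\leq m_{ACR}$.

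Next I would convert this into the variation inequality. From the definition $v_+(\mathscr N, K)=(m-m_{ACR})/m$, the bound $m'_{ACR}\leq m_{ACR}$ yields $m-m_{ACR}\leq m-m'_{ACR}$, and dividing by $m$ gives $v_+(\mathscr N,K)\leq (m-m'_{ACR})/m$. The right-hand side is exactly the \emph{embedded} equilibria variation $\Tilde{v}_+(\mathscr N', K')$, since the embedded subnetwork carries the full species set $\mathscr S$ of cardinality $m$ while counting ACR species by $m'_{ACR}$. This is precisely where using the embedded rather than the non-embedded representation is essential: both $v_+(\mathscr N,K)$ and $\Tilde{v}_+(\mathscr N',K')$ share the common denominator $m$, so the numerator comparison transfers directly to a comparison of ratios. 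Chaining the two displays completes the argument.

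The main obstacle, such as it is, lies in being careful about which variation is on the right-hand side. The inequality is false in general for the \emph{non-embedded} variation $v_+(\mathscr N',K')$, whose denominator is $m'<m$; the earlier Proposition quantifies exactly this gap through $0\leq \Tilde{v}_+(\mathscr N',K')-v_+(\mathscr N',K')\leq (m-m')/m$. So the one point requiring attention is to invoke the embedded count throughout and to cite the independence hypothesis as the sole source of the ACR-propagation from \cite{lao}; without independence the containment $m'_{ACR}\leq m_{ACR}$ can fail.
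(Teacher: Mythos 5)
Your proposal is correct and follows essentially the same route as the paper: invoke the ACR-propagation result for independent decompositions (Proposition 4.4 of \cite{lao}) to get $m'_{ACR}\leq m_{ACR}$, then pass to $m-m_{ACR}\leq m-m'_{ACR}$ and divide by the common denominator $m$, which is exactly the paper's one-line argument. Your added remark that the embedded variation $\Tilde{v}_+$ is essential on the right-hand side (since the non-embedded version has denominator $m'$ and the inequality can fail for it, as the paper's insulin example with $\mathscr N_A$ shows) is a correct and worthwhile clarification, but does not change the substance of the proof.
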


\begin{proof}
    It follows that $m-m_{ACR}\leq m-m'_{ACR}$ and dividing both sides with $m$ results in the claim.
\end{proof} 

\subsubsection{The example of metabolic insulin signaling in healthy cells}

Lubenia et al. \cite{lubenia} constructed a mass action kinetic realization of the widely used model of metabolic insulin signaling (in healthy cells) by Sedaghat et al. \cite{sedaghat}. They used the kinetic system's finest independent decomposition (FID) to conduct an ACR analysis and showed that $m_{ACR} \geq 8$ for all rate constants (such that the system has positive equilibria) and $m_{ACR} = 8$ for some rate constants. Hernandez et al. [3] confirmed that  $m_{ACR} = 8$ for all rate constants with a positively equilibrated system. Hence, $v_+(\mathscr N, K) = \dfrac{20 - 8}{20} = \dfrac{12}{20} =\dfrac{3}{5} = 0.60$.

The table below (see Table \ref{lube}), which was taken from \cite{lubenia}, provides an overview of the characteristics of the FID subnetworks. Note that, with the exception of $\mathscr{N}_1$, all subnetworks are rank-one systems. Applying the Meshkat et al. criterion to these 9 subnetworks showed that only the one-species systems $\mathscr{N}_2$ and $\mathscr{N}_{10}$ had ACR for the species $X_6$ and $X_{20}$,  respectively. All other subnetworks had no ACR species. Hence, we have:

\begin{itemize}
    \item For $i=3,\dots,9,  v_+(\mathscr N, K) = 0.60 < 1 = v_+(\mathscr N_i, K_i) = \Tilde{v}_+(\mathscr N_i, K_i)$
    \item For $i=2,\dots,10,  v_+(\mathscr N_i, K_i) = 0< 0.60 = v_+(\mathscr N, K) = \Tilde{v}_+(\mathscr N_i, K_i)=\dfrac{19}{20}=0.95$
\end{itemize}

\begin{table}[H]
\begin{tabular}{|l|c|c|c|c|c|c|c|c|c|c|}
\hline
\textbf{Network numbers}        & $\mathscr N_1$ & $\mathscr N_2$ & $\mathscr N_3$ & $\mathscr N_4$ & $\mathscr N_5$ & $\mathscr N_6$ & $\mathscr N_7$ & $\mathscr N_8$ & $\mathscr N_9$ & $\mathscr N_{10}$ \\ \hline
Species                         & 7             & 1             & 4             & 3             & 3             & 2             & 3             & 3             & 4             & 1                \\ \hline
Complexes                       & 7             & 2             & 6             & 2             & 4             & 2             & 4             & 4             & 6             & 2                \\ \hline
Reactant complexes              & 7             & 2             & 3             & 2             & 2             & 2             & 2             & 2             & 4             & 2                \\ \hline
Reversible reactions            & 5             & 1             & 0             & 1             & 0             & 1             & 0             & 0             & 1             & 1                \\ \hline
Irreversible reactions          & 4             & 0             & 3             & 0             & 2             & 0             & 2             & 2             & 2             & 0                \\ \hline
Reactions                       & 14            & 2             & 3             & 2             & 2             & 2             & 2             & 2             & 4             & 2                \\ \hline
Linkage classes                 & 1             & 1             & 3             & 1             & 2             & 1             & 2             & 2             & 3             & 1                \\ \hline
Strong linkage classes          & 1             & 1             & 6             & 1             & 4             & 1             & 4             & 4             & 5             & 1                \\ \hline
Terminal strong linkage classes & 1             & 1             & 3             & 1             & 2             & 1             & 2             & 2             & 3             & 1                \\ \hline
Rank                            & 6             & 1             & 1             & 1             & 1             & 1             & 1             & 1             & 1             & 1                \\ \hline
Reactant rank                   & 7             & 1             & 3             & 2             & 2             & 2             & 2             & 2             & 4             & 1                \\ \hline
Deficiency                      & 0             & 0             & 2             & 0             & 1             & 0             & 1             & 1             & 2             & 0                \\ \hline
Reactant deficiency             & 0             & 1             & 0             & 0             & 0             & 0             & 0             & 0             & 0             & 1                \\ \hline
\end{tabular}
\caption{Some numbers of FID subnetworks.}
\label{lube}
\end{table}

%\begin{center}
%    \centering
%\includegraphics[width=1.0\columnwidth]%{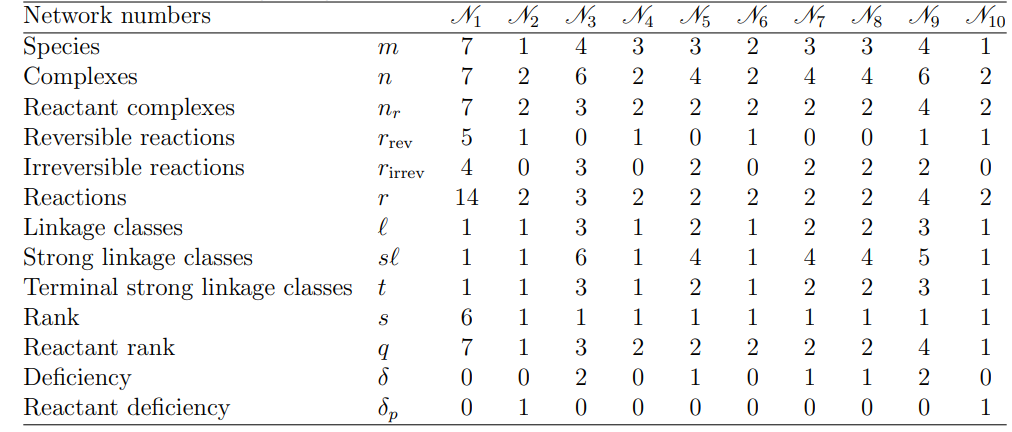}
%\end{center}

The authors considered also the coarsening $\mathscr N_A \cup \mathscr N_B$, where $\mathscr N_A$ is the union of all deficiency zero subnetworks and $\mathscr N_B$ the union of all positive deficiency subnetworks.  They also showed that all ACR species are contained solely in $\mathscr N_A$.  Hence, in this case, we have: 

\begin{itemize}
    \item $v_+(\mathscr N_A, K_A) =\dfrac{13-8}{13}=0.38 < 0.6 = v_+(\mathscr N, K) = \Tilde{v}_+(\mathscr N_A, K_A)$
    \item $v_+(\mathscr N, K) = 0.6<1 = v_+(\mathscr N_B, K_B) = \Tilde{v}_+(\mathscr N_B, K_B)$
   
\end{itemize}

\noindent These examples show that all the inequalities in the propositions above are sharp.

\section{Summary and outlook}

This study was motivated by Meshkat et al.'s result which allows the analysis of rank-one MAK systems for possession of stable ACR. The result guarantees a MAK system such robustness if the two identified structural conditions are met. These conditions require the system to have an embedded network that follows certain structures and reactant complexes that differ in just one species. We attempted to extend this result to more general PLK systems but to no avail. We found an example illustrating how these conditions do not always work in a general setting. This means that, unlike other earlier results on ACR, the conditions in this result do not always insure the existence of ACR in a PLK system. 

On the other hand, we found a subclass of PLK systems where the stable ACR criterion of Meshkat et al. holds. We call this subclass homogenous monomial quotients of mass action systems or PL-MMK for short. This subclass is obtained by utilizing the set of rate constants of a given MAK system and its modified kinetic order matrix.

We also discovered a property that is necessarily present in a multistationary rank-one system that possesses an ACR.  Specifically, the corresponding result indicates that such system that has an ACR must have a basis vector generator of the stoichiometric subspace with 0 as a coordinate in the ACR species. We illustrated this result using a multistationary system that was given in the paper of Pantea and Voitiuk.

Finally, we considered the concept of 
 equilibria variation of independent subnetworks in this paper. We discussed some mathematical relationships of the equilibria variations of embedded and non-embedded subnetworks. These relationships were illustrated through the data shown in \cite{lubenia} that Lubenia et al. used for analyzing metabolic insulin signaling of healthy cells. It is important to note that ACR species of a subnetwork is always contained in the corresponding non-embedded subnetwork.  

 For future studies, one can look at the extension problem of the rank-one ACR criterion, that is, the identification of further kinetic systems (beyond mass action) for which it holds. One can also consider the exploration of relationships between multistationarity classes and the necessary condition for rank-one mass action systems as well as an extension of the Pantea-Voitiuk classification beyond mass action. Further, it is also interesting to identify further kinetics sets for which the general low bound can be computed or a much sharper alternative as in rank-one multistationary systems can be derived.

\vspace{0.5cm}
\noindent \textbf{Acknowledgement}

\noindent D. Talabis, E. Jose, and L. Fontanil acknowledge the support of the University of the Philippines Los Ba\~{n}os through the Basic Research Program.

\baselineskip=0.25in
%\bibliographystyle{abbrv}
%\bibliography{ref_acrdz}

\begin{thebibliography}{10}

\bibitem{anderies} J. Anderies, S. Carpenter, W. Steffen, and J. Rockstr\"{o}m. The topology of non-linear
global carbon dynamics: from tipping points to planetary boundaries, \textit{Environ. Res. Lett.} \textbf{
8} (2013) 44--48.


\bibitem{arce} 
C. Arceo, E. Jose, A. Mar{\'i}n-Sanguino, E. Mendoza, Chemical reaction network approaches to biochemical systems theory, \textit{Math. Biosci.} \textbf{269} (2015) 135--152.

\bibitem{boros2013} 
B. Boros, On the existence of the positive steady states of weakly reversible deficiency-one mass action systems,\textit{ Math. Biosci.} \textbf{245} (2013) 157--170.

\bibitem{fari} 
H. Farinas, E. Mendoza, A. Lao, \newblock Chemical reaction network decompositions and realizations of S-systems, submitted.

\bibitem{fein1} 
M. Feinberg, Chemical reaction network structure and the stability of complex isothermal reactors I: The deficiency zero and deficiency one theorems, \textit{Chem. Eng. Sci.} \textbf{42} (1987) 2229--2268.

\bibitem{fein2} 
M. Feinberg, {\em Foundations of Chemical Reaction Network Theory}, Springer, 2019.

\bibitem {fein3} 
M. Feinberg, {\em Lectures on Chemical Reaction Networks}, Notes of lectures given at the Mathematics Research Center of the University of Wisconsin, 1979.

\bibitem{fort1}
N. Fortun, A. Lao, L. Razon, E. Mendoza,  A deficiency-one algorithm for a power-law kinetics with reactant-determined interactions, {\em J. Math. Chem.} \textbf{56} (2018) 2929--2962.

\bibitem{noel} N. Fortun, A. Lao, L. Razon, and E. Mendoza, Multistationarity in earth's pre-industrial
carbon cycle models, {\em Manila J. Sci.} \textbf{11} (2018) 81--96.

\bibitem{fort2}
N. Fortun, E. Mendoza, L. Razon, A. Lao, A deficiency zero theorem for a class of power law kinetic systems with non-reactant determined interactions, {\em MATCH Commun. Math. Comput. Chem.} \textbf{81} (2019) 621--638.

\bibitem{fort3}
N. Fortun, E. Mendoza, Absolute concentration robustness in power law kinetic systems, \textit{MATCH Commun. Math. Comput. Chem.} \textbf{85} (2021) 669--691.

\bibitem{fort3.5}
N. Fortun, E. Mendoza. Comparative analysis of carbon cycle models via kinetic representations, \textit{Journal of Mathematical Chemistry} (2023, in press).

\bibitem{fort4}
N. Fortun, A. Lao, L. Razon, E. Mendoza, Robustness in power-law kinetic systems with reactant-determined interactions, In: Akiyama, J., Marcelo, R.M., Ruiz, MJ.P., Uno, Y. (eds) Discrete and Computational Geometry, Graphs, and Games, JCDCGGG 2018, Lecture Notes in Computer Science, \textbf{13034} Springer, Cham.

\bibitem{hernandez} 
B. Hernandez, and E. Mendoza, Positive equilibria of power law kinetics on networks with independent linkage classes, \textit{Journal of Mathematical Chemistry} (2023), accepted Nov 25, 2022, online: https://doi.org/10.1007/s10910-022-01432-w. 

\bibitem{horn}
F. Horn, R. Jackson, General mass action kinetics, \textit{Arch. Ration. Mech. Anal.} \textbf{47} (1972) 187-194.

\bibitem{ACB2022}
E. Jose, D. Talabis, E. Mendoza, Absolutely Complex Balanced Kinetic Systems, \textit{MATCH Commun. Math. Comput. Chem.} \textbf{88} (2022) 397--436.

\bibitem {kitano}
H. Kitano, Biological robustness, \textit{Nat Rev Genet}, \textbf{5} (2004) 826--837.

\bibitem{lao}
A. Lao, P. Lubenia, D. Magpantay, E. Mendoza, Concentration robustness in LP kinetic systems, \textit{MATCH Commun. Math. Comput. Chem.} \textbf{88} (2022) 29--66.

\bibitem{lubenia}
  P.V. Lubenia, E. Mendoza, A. Lao. Reaction network analysis of metabolic insulin signaling, \textit{Bulletin of Mathematical Biology} \textbf{84} (2022) 129--151.

\bibitem{mesh}
N. Meshkat, A. Shiu, A. Torres, Absolute concentration robustness in networks with low-dimensional stoichiometric subspace, \textit{Vietnam J. Math.} \textbf{50} (2022) 623--651.

\bibitem{muller}
S. M\"{u}ller, G. Regensburger, Generalized mass action systems and positive solutions of polynomial equations with real and symbolic exponents, \textit{Proceedings of CASC 2014, (eds. V.P. Gerdt, W. Koepf, W.M. Seiler, E.H. Vorozhtsov), Lecture Notes in Comput. Sci.} \textbf{8660} (2014) 302--323.

\bibitem{sedaghat} A. Sedaghat, A. Sherman, M. Quon, A mathematical model of metabolic insulin signaling pathways, \textit{ Am J Physiol Endocrinol Metab}, \textbf{283} (2002) 1084--1101.


\bibitem{shinar}
G. Shinar, M. Feinberg, Concordant chemical reaction networks, \textit{Mathematical Biosciences} \textbf{240} (2012) 92--113. 

\bibitem{shifein}
G. Shinar and M. Feinberg, Structural sources of robustness in biochemical reaction networks, \textit{Science}, \textbf{327} (2010) 1389--1391.

\bibitem{shifein2}
G. Shinar, M. Feinberg, Design principles for robust biochemical reaction networks: what works what cannot work and what might almost work,
\textit{Math. Biosci.} \textbf{231} (2011) 39--48.

\bibitem{voitiuk}
 C. Pantea, G. Voitiuk, Classification of multistationarity for reaction networks with one-dimensional stoichiometric subspaces (preprint).

\end{thebibliography}

\end{document}